\documentclass[11pt]{article}
\usepackage[utf8]{inputenc}
\usepackage{algorithm}
\usepackage{algpseudocode}
\usepackage{geometry}
\usepackage{secdot}
\usepackage{amsmath}
\usepackage{amsbsy,amsmath,amsthm,amssymb}
\usepackage{amsfonts}
\usepackage{mathrsfs,bm}
\usepackage{natbib}
\usepackage{graphicx}
\usepackage{graphics}
\usepackage{float}
\usepackage{hyperref}
\usepackage{multirow}
\usepackage{caption} 
\usepackage{bm}
\usepackage{xurl}

\hypersetup{
	colorlinks=true,
	linkcolor=blue,
	filecolor=blue,      
	urlcolor=blue,
	citecolor=blue,
}

\usepackage[english]{babel}
\usepackage{lastpage}
\usepackage{verbatim}
\usepackage{pdfpages}
\usepackage{authblk}


\newcommand{\df}{\mbox{d}}
\newcommand{\mix}{P_{\scriptsize\mbox{mix}}}
\newcommand{\mixdensity}{p_{\scriptsize\mbox{mix}}}
\newcommand{\betareal}{\bm{\beta}}
\newcommand{\sigmareal}{\bm{\varsigma}}

\newtheorem{theorem}{Theorem}
\newtheorem{lemma}[theorem]{Lemma}

\newtheorem{proposition}[theorem]{Proposition}
\newtheorem{definition}[theorem]{Definition}

\newtheorem{remark}[theorem]{Remark}

\title{\bf{Convergence Analysis of Data Augmentation Algorithms for Bayesian Robust Multivariate Linear Regression with Incomplete Data}}
\author{Haoxiang Li}
\author{Qian Qin}
\author{Galin L. Jones}

\affil{School of Statistics \\ University of Minnesota}

\date{}

\linespread{1.5}

\begin{document}

\maketitle

\begin{abstract}
Gaussian mixtures are commonly used for modeling heavy-tailed error distributions in robust linear regression.
Combining the likelihood of a multivariate robust linear regression model with a standard improper prior distribution yields an analytically intractable posterior distribution that can be sampled using a data augmentation algorithm.
When the response matrix has missing entries, there are unique challenges to the application and analysis of the convergence properties of the algorithm.
Conditions for geometric ergodicity are provided when the incomplete data have a ``monotone" structure.
In the absence of a monotone structure, an intermediate imputation step is necessary for implementing the algorithm.
In this case, we provide sufficient conditions for the algorithm to be Harris ergodic.
Finally, we show that, when there is a monotone structure and intermediate imputation is unnecessary, intermediate imputation slows the convergence of the underlying Monte Carlo Markov chain, while post hoc imputation does not.
An R package for the data augmentation algorithm is provided.
\end{abstract}

\section{Introduction}
Markov chain Monte Carlo (MCMC) algorithms are frequently used for sampling from intractable probability distributions in Bayesian statistics.
We study the convergence properties of a class of MCMC algorithms designed for Bayesian multivariate linear regression with heavy-tailed errors.
Contrary to most existing works in the area of MCMC convergence analysis, we focus on scenarios where the data set is incomplete. The existence of missing data substantially complicates the analysis.

Gaussian mixtures are suitable for constructing heavy-tailed error distributions in robust linear regression models \citep{zellner1976bayesian,lange1993normal,fernandez2000bayesian}.
In a Bayesian setting where a simple improper prior is used, the mixture representation facilitates a data augmentation (DA) MCMC algorithm \citep{liu1996bayesian} that can be used to sample from the posterior distribution of the regression coefficients and error scatter matrix.
When there are no missing data, this algorithm is geometrically ergodic under regularity conditions \citep{roy2010monte,hobert2018convergence}.
Loosely speaking, geometric ergodicity means that the Markov chain converges to the posterior distribution at a geometric, or exponential rate.
Geometric ergodicity is important because it guarantees the existence of a central limit theorem for ergodic averages, which is in turn crucial for assessing the accuracy of Monte Carlo estimators \citep[]{chan1994discussion,jones2001honest,jones2004markov,flegal2008markov, vats2018strong, vats2019multivariate, jones2021markov}.

The current work studies the algorithm when the response matrix has missing entries.
An incomplete data set brings unique challenges to the implementation and analysis of the DA algorithm.
For instance, establishing posterior propriety becomes much more difficult than when there are no missing values.
If the posterior measure is not proper, the DA algorithm will produce nonsensical results \citep{hobert1996effect}.
However, if one can show that the algorithm is geometrically ergodic, then the posterior distribution is guaranteed to be proper.

When the missing data have a certain ``monotone" structure, the DA algorithm can be carried out without an intermediate step to impute the missing data.
In this case, we establish geometric ergodicity under conditions on the error distribution and the amount of incomplete response components.
Roughly speaking, when the mixing distribution in the Gaussian mixture representation of the error distribution places little mass near the origin and the number of incomplete components is not too large, the DA algorithm is geometrically ergodic. The conditions are satisfied by many mixing distributions, including distributions with finite supports, log-normal, generalized inverse Gaussian, and inverse Gamma or Fréchet with shape parameter greater than $d/2$, where $d$ is the dimension of the response variable.  Some Gamma, Weibull, and F distributions also satisfy the conditions.
A post hoc imputation step can be added to fill in the missing values and the convergence properties of the DA algorithm will be unaffected. 

When the missing data do not posses a monotone structure, some missing entries need to be imputed to implement the DA algorithm.
This results in a data augmentation algorithm with an intermediate (as opposed to post hoc) imputation step, which we call a DAI algorithm for short.
We provide sufficient conditions for the DAI algorithm to be Harris ergodic. 
Harris ergodicity is weaker than geometric ergodicity, but it guarantees posterior propriety as well as the existence of a law of large numbers for ergodic averages \citep[Theorem 17.1.7]{meyn2005markov}.

When the missing data have a monotone structure, both the DA (with or without post hoc imputation) and DAI algorithms can be applied.
However, we show that the DA algorithm converges in $L^2$ at least as fast as the DAI algorithm.

Our key strategy is to draw a connection from cases where the data set is incomplete to the standard case where the data set is fully observed.
This allows for an analysis of the former using tools built for the latter.

Finally, we provide an R package \texttt{Bayesianrobust} that implements the DA and DAI algorithms. The R package is available from \url{https://github.com/haoxiangliumn/Bayesianrobust}. 
While the algorithms are documented in \cite{liu1996bayesian}, we do not know of previous software packages that implement them.

The rest of is organized as follows.  Section \ref{sec:model} recounts the Bayesian robust linear regression model with incomplete data. In Section \ref{sec:algorithms}, we describe the DA and DAI algorithms. 
Our main results are in Section \ref{sec:ca}, where we provide conditions for geometric ergodicity of the DA algorithm and Harris ergodicity of the DAI algorithm.
The section also contains a comparison between the DA and DAI algorithms in terms of $L^2$ convergence rate. 
Section \ref{sec:num} presents a numerical experiment. 

\section{Robust Linear Regression with Incomplete Data}\label{sec:model}

Let $(\bm{Y}_i,\bm{x}_i), \; i\in\{1,\dots,n\}$ be~$n$ independent data points, where 
$\bm{x}_i = (x_{i,1}, \dots, x_{i,p})^\top$ is a $p\times 1$ vector of known predictors, and $\bm{Y}_i = (Y_{i,1},\dots,Y_{i,d})^\top$ is a $d \times 1$ random vector. 
Consider the multivariate linear regression model
\begin{equation} \nonumber
	\bm{Y}_i = \bm{B}^\top \bm{x}_i + \bm{\Sigma}^{1/2} \bm{\varepsilon}_i,\ i \in \{1,\dots,n\},  
\end{equation}
where $\bm{B}$ is a $p\times d$ matrix of unknown regression coefficients, $\bm{\Sigma}$ is a $d\times d$ unknown positive definite scatter matrix, and $\bm{\varepsilon}_i, \; i\in\{1,\dots,n\}$, are $d\times1$ iid random errors.
Let $\bm{Y}$ be the $n\times d$ response matrix whose $i$th row  is $\bm{Y}_i^\top$,  and let $\bm{x}$ be the $n \times p $ design matrix whose $i$th row is $\bm{x}_i^\top$.

To allow for error distributions with potentially heavy tails,
assume that the distribution of each $\bm{\varepsilon}_i$ is described by a scale mixture of multivariate normal densities, which takes the form
\begin{equation} \nonumber
	f_{\text{err}}(\bm{\epsilon}) = \int_{0}^{\infty} \frac{w^{d/2}}{(2\pi)^{d/2}} \exp \left(-\frac{w}{2} \bm{\epsilon}^{T} \bm{\epsilon} \right) \mix(\df w) , \quad \bm{\epsilon} \in \mathbb{R}^d,  
\end{equation}
where $\mix(\cdot)$ is a probability measure on $(0,\infty)$ referred to as the mixing distribution. 
Gaussian mixtures constitute a variety of error distributions, and are widely used for robust regression.
For instance, when $\mix(\cdot)$ corresponds to the $\text{Gamma}(v/2,v/2)$ distribution for some $v > 0$, i.e., $\mix(\cdot)$ has a density with respect to the Lebesgue measure given by
\[
\mixdensity(w)  \propto w^{v/2-1}\exp(vw/2) , \quad w>0,
\]
the errors follow the multivariate~$t$ distribution with~$v$ degrees of freedom, which has density function
\[
f_{\text{err}}(\bm{\epsilon}) \propto (1+\bm{\epsilon}^\top\bm{\epsilon}/v)^{-(d+v)/2}, \quad \bm{\epsilon} \in \mathbb{R}^d.
\]
See Section~\ref{ssec:geo} for more examples.

Consider a Bayesian setting. Throughout, we use $(\betareal, \sigmareal)$ to denote realized values of $(\bm{B}, \bm{\Sigma})$. Assume that $(\bm{B}, \bm{\Sigma})$ has the following prior density:
\begin{equation}\label{prior}
	p_{\scriptsize\mbox{prior}}(\betareal, \sigmareal)  	\propto |\sigmareal|^{-(m+1)/2} \exp \left\{-\frac{1}{2} \operatorname{tr} \left(\sigmareal^{-1} \bm{a} \right)\right\}, \quad \betareal \in \mathbb{R}^{p \times d}, \; \sigmareal \in S_+^{d \times d},
\end{equation}
where $m\in \mathbb{R}$, $\bm{a} \in S_+^{d \times d}$, and $S_+^{d \times d}$ is the convex cone of $d \times d$ (symmetric) positive semi-definite real matrices. 
Equation \eqref{prior} defines a class of commonly used default priors \citep{liu1996bayesian,fernandez1999multivariate}. 
For instance, the independence Jeffrey's prior corresponds to $m=d$ and $\bm{a}=\bm{0}$.
Denote by $P_{\scriptsize\mbox{prior}}$ the measure associated with~$p_{\scriptsize\mbox{prior}}$. 
The model of interest can then be written in the hierarchical form:
\begin{equation} \nonumber
	\begin{aligned}
		\bm{Y}_i \mid \bm{W}, \bm{B}, \bm{\Sigma} &\stackrel{\scriptsize\mbox{ind}}{\sim} \mbox{N}_d \left(  \bm{B}^\top \bm{x}_i, W_i^{-1} \bm{\Sigma} \right), \; i \in\{1,\dots,n\}; \\
		W_i \mid \bm{B}, \bm{\Sigma} &\stackrel{\scriptsize\mbox{ind}}{\sim} \mix(\cdot),\; i \in\{1,\dots,n\}; \\
		\bm{B},\bm{\Sigma} &\sim P_{\scriptsize\mbox{prior}}(\cdot),
	\end{aligned}
\end{equation}
where $\mbox{N}_d$ denotes $d$-variate normal distributions. Let 
$\bm{W} := (W_1,\dots,W_n)^\top$ be a vector of latent random variables.
The joint measure of $(\bm{Y},\bm{W},\bm{B},\bm{\Sigma})$ is given by
\begin{equation} \nonumber
	\begin{aligned}
		P_{\scriptsize\mbox{joint}}(\df \bm{y}, \df \bm{w}, \df \betareal, \df \sigmareal) = p_{\scriptsize\mbox{joint}}(\bm{y}, \bm{w}, \betareal,  \sigmareal) \, \df \bm{y} \, \mix(\df \bm{w}) \, \df \betareal \, \df \sigmareal,
	\end{aligned}
\end{equation}
where, for $\bm{y} = (\bm{y}_1,\dots,\bm{y}_n)^\top \in \mathbb{R}^{n \times d}$,  $\bm{w} = (w_1,\dots,w_n)^\top \in (0,\infty)^n$, $\betareal \in \mathbb{R}^{p \times d}$, and $\sigmareal \in S_+^{d \times d}$,
\begin{equation}  \label{eq:fullmodel}
	p_{\scriptsize\mbox{joint}}(\bm{y}, \bm{w}, \betareal,  \sigmareal)  
	\propto  \, p_{\scriptsize\mbox{prior}}(\betareal, \sigmareal) \prod_{i=1}^n \frac{w_i^{d/2}}{ |\sigmareal|^{1/2}} \exp \left\{ -\frac{1}{2}w_i \left(\bm{y}_i - \bm{\beta}^\top \bm{x}_i \right)^\top \sigmareal^{-1} \left(\bm{y}_i - \bm{\beta}^\top \bm{x}_i \right) \right\}.
\end{equation}
Throughout, conditional distributions concerning $(\bm{Y},\bm{W},\bm{B},\bm{\Sigma})$ are to be uniquely defined through the density $p_{\scriptsize\mbox{joint}}$.


We assume that~$\bm{x}$ is fully observed, but~$\bm{Y}$ may contain missing values.
The missing structure can be described by an $n \times d$ random matrix $\bm{K} = (K_{i,j})_{i=1}^n{}_{j=1}^d$, with $K_{i,j} = 1$ indicating $Y_{i,j}$ is observed, and $K_{i,j} = 0$ indicating $Y_{i,j}$ is missing.
For a realized response matrix $\bm{y} = (y_{i,j})_{i=1}^n{}_{j=1}^d \in \mathbb{R}^{n \times d}$ and a realized missing structure $\bm{k} = (k_{i,j})_{i=1}^n{}_{j=1}^d \in \{0,1\}^{n \times d}$, let $\bm{y}_{(\bm{k})}$ be the array $(y_{i,j})_{(i,j) \in A(\bm{k}) }$, where $A(\bm{k}) = \{(i,j): k_{i,j} = 1 \}$.
Then $\bm{Y}_{(\bm{K})}$ gives the observable portion of~$\bm{Y}$.
In practice, instead of observing $\bm{Y}$, one sees instead $(\bm{K}, \bm{Y}_{(\bm{K})})$.
Throughout, we assume that the missing data mechanism is ignorable, 
which means the following.
\begin{definition}
	The missing data mechanism is ignorable if, for any $\bm{k} \in \{0,1\}^{n \times d}$ and almost every $\bm{y} \in \mathbb{R}^{n \times d}$, the posterior distribution of $(\bm{B},\bm{\Sigma})$ given $(\bm{K}, \bm{Y}_{(\bm{K})}) = (\bm{k}, \bm{y}_{(\bm{k})})$ is the same as the conditional distribution of $(\bm{B},\bm{\Sigma})$ given $\bm{Y}_{(\bm{k})} = \bm{y}_{(\bm{k})}$.
\end{definition}
\noindent If, given $(\bm{B}, \bm{\Sigma}) = (\betareal,\sigmareal)$, the distribution of~$\bm{K}$ does not depend on $(\betareal,\sigmareal)$, and the data are ``realized missing at random"  \citep{seaman2013meant}, which can be implied by the fact that $\bm{K}$ is independent of~$\bm{Y}$, then the missing data mechanism is ignorable.  
From here on, fix a realized missing structure~$\bm{k} = (k_{i,j})_{i=1}^n{}_{j=1}^d \in \{0,1\}^{n \times d}$, and only condition on $\bm{Y}_{(\bm{k})}$ when studying posterior distributions.
Without loss of generality, assume that each row of~$\bm{k}$ contains at least one nonzero element, i.e., for each~$i$, $\bm{Y}_i$ has at least one observed entry.

To write down the exact form of the posterior, we introduce some additional notation.
Suppose that given $i \in \{1,\dots,n\}$, $k_{i,j} = 1$ if and only if $j \in \{j_1,\dots,j_{d_i}\}$ for some $d_i \in \{1,\dots,d\}$, where $j_1 < \dots < j_{d_i}$.
Given~$i$, let $\bm{c}_{i,(\bm{k})}$ be the $d_i \times d$ matrix that satisfies the following: 
For $\ell = 1,\dots,d_i$, in the $\ell$th row of $\bm{c}_{i,(\bm{k})}$, all elements except the $j_\ell$th one are~0, while the $j_\ell$th one is~1.
For instance, if $d=4$, $d_i = 2$, $j_1 = 1$, $j_2 = 3$, then
\[
\bm{c}_{i,(\bm{k})} = \left( \begin{array}{cccc}
	1 & 0 & 0 & 0 \\
	0 & 0 & 1 & 0
\end{array}
\right).
\] 
Then $\bm{Y}_{i,(\bm{k})} := \bm{c}_{i,(\bm{k})} \bm{Y}_i$ is a vector consisting of the observed components of $\bm{Y}_i$ if $\bm{K} = \bm{k}$, and we can write $\bm{Y}_{(\bm{k})}$ as $(\bm{Y}_{i,(\bm{k})} )_{i=1}^n$.
%
%
%
%
%
For a realized value of~$\bm{Y}$, say $\bm{y} = (y_{i,j})_{i=1}^n{}_{j=1}^d \in \mathbb{R}^{n \times d}$, denote by $\bm{y}_i$ the $i$th row of~$\bm{y}$ transposed, let $\bm{y}_{i,(\bm{k})} = \bm{c}_{i,(\bm{k})} \bm{y}_i$, and let $\bm{y}_{(\bm{k})} = ( \bm{y}_{i,(\bm{k})} )_{i=1}^n$.
Based on~ Equation \eqref{eq:fullmodel}, the posterior density of $(\bm{B},\bm{\Sigma} )$ given $\bm{Y}_{(\bm{k})} = \bm{y}_{(\bm{k})} 
$ has the following form:
\begin{equation} \nonumber 
	\pi_{\bm{k}} \left(\betareal,\sigmareal \mid \bm{y}_{(\bm{k})} \right) \propto   p_{\scriptsize\mbox{prior}}(\betareal,\sigmareal) \prod_{i=1}^n \int_0^{\infty} \frac{w^{d_i/2}}{ \left|\bm{c}_{i,(\bm{k})} \sigmareal \bm{c}_{i,(\bm{k})}^\top \right|^{1/2}} \exp \left( - \frac{r_{i,(\bm{k})} w}{2} \right) \, \mix(\df w),
\end{equation}
where, for $i \in \{1,\dots,n\}$,
\begin{equation}\label{eq:r}
	r_{i,(\bm{k})} = \left(\bm{y}_{i, (\bm{k})} - \bm{c}_{i,(\bm{k})} \betareal^\top \bm{x}_i \right)^\top \left( \bm{c}_{i,(\bm{k})} \sigmareal \bm{c}_{i,(\bm{k})}^\top \right)^{-1}  \left(\bm{y}_{i, (\bm{k})} - \bm{c}_{i,(\bm{k})} \betareal^\top \bm{x}_i \right).
\end{equation}

\begin{remark}
	Because the prior distribution is not proper, $\pi_{\bm{k}}(\cdot \mid \bm{y}_{(\bm{k})})$ is not automatically a proper probability density.
	As a side product of our convergence analysis, we give sufficient conditions for posterior propriety in Section \ref{sec:ca}.
\end{remark}

The posterior density $\pi_{\bm{k}}(\cdot \mid \bm{y}_{(\bm{k})})$ is almost always intractable in the sense that it is hard to calculate its features such as expectation and quantiles.
\cite{liu1996bayesian} proposed a data augmentation (DA) algorithm, or two-component Gibbs sampler, that can be used to sample from this distribution.
This is an MCMC algorithm that simulates a Markov chain $(\bm{B}(t), \bm{\Sigma}(t))_{t=0}^{\infty}$ that is reversible with respect to the posterior.
In the next section, we describe the algorithm in detail.

\section{The DA and DAI Algorithms} \label{sec:algorithms}
\subsection{Missing Structures} \label{ssec:missing}

To adequately describe the DA and DAI algorithms, we need to introduce some concepts regarding missing structure matrices.

A realized missing structure $\bm{k} = (k_{i,j})_{i=1}^n{}_{j=1}^d \in \{0,1\}^{n \times d}$ is said to be monotone if the following conditions hold:
\begin{enumerate}
	
	\item [(i)] If $k_{i,j} = 1$ for some $i \in \{1,\dots,n\}$ and $j \in \{1,\dots,d\}$, then $k_{i',j'} = 1$ whenever $i' \leq i$ and $j' \geq j$.
	\item [(ii)] $k_{i,d} = 1$ for $i \in \{1,\dots,n\}$.
\end{enumerate}

Note that in practice, there are cases where 
the observed missing structure can be re-arranged to become monotone by permuting the rows and columns of the response matrix.
If there are no missing data, the corresponding missing structure is monotone.

\begin{table}[]
	\centering
	\caption{The observed response under a monotone structure}\label{monotone}
	\begin{tabular}{c|cccc|}
		\cline{2-5}
		\multicolumn{1}{c|}{\multirow{3}{*}{Pattern 1}} & $y_{1,1}$       & $y_{1,2}$       & $\cdots$                   & $y_{1,d}$       \\
		\multicolumn{1}{c|}{}                           & $\vdots$                    & $\vdots$                    & $\ddots$                    & $\vdots$                   \\
		\multicolumn{1}{c|}{}                           & $y_{n_1,1}$     & $y_{n_1,2}$     & $\cdots$                   & $y_{n_1,d}$     \\ \cline{2-2}
		\multirow{3}{*}{Pattern 2}             & \multicolumn{1}{l|}{} & $y_{n_1+1,2}$   & $\cdots$                    & $y_{n_1+1,d}$   \\
		& \multicolumn{1}{l|}{} & $\vdots$                   & $\ddots$                    & $\vdots$                    \\
		& \multicolumn{1}{l|}{} & $y_{n_1+n_2,2}$ & $\cdots$                   & $y_{n_1+n_2,d}$ \\ \cline{3-3}
		$\vdots$                                             &                       & \multicolumn{1}{l|}{} & $\ddots$                    & $\vdots$                   \\ \cline{4-4}
		\multirow{3}{*}{Pattern $d$}             &                       &                       & \multicolumn{1}{l|}{} & $y_{n-n_d+1,d}$   \\
		&                       &                       & \multicolumn{1}{l|}{} & $\vdots$  \\
		&                       &                       & \multicolumn{1}{l|}{} & $y_{n,d}$       \\ \cline{2-5} 
	\end{tabular}
\end{table}

Let $\bm{k} = (k_{i,j})_{i=1}^n{}_{j=1}^d$ be monotone.
Then, for a realized response matrix $\bm{y} = (y_{i,j})_{i=1}^n{}_{j=1}^d \in \mathbb{R}^{n \times d}$, the elements in $\bm{y}_{(\bm{k})}$ can be arranged as in Table~\ref{monotone}.
We say that the $i$th observation belongs to pattern~$\ell$ for some $\ell \in \{1,\dots,d\}$ if $k_{i,j} = 1$ for $j \geq \ell$ and $k_{i,j} = 0$ for $j < \ell$.
There are~$d$ possible patterns. 
For $\ell \in \{1,\dots,d\}$, denote by $n_\ell(\bm{k})$  the number of observations that belong to pattern~$\ell$.
Let $\bm{y}_{(\bm{k},\ell)}$ be the $[\sum_{j=1}^\ell n_j(\bm{k})] \times (d-\ell+1)$ matrix whose $i$th row is
$( y_{i,\ell}, \dots, y_{i,d} ),$
and let $\bm{x}_{(\bm{k},\ell)}$ be the submatrix of~$\bm{x}$ formed by the first $\sum_{j=1}^\ell n_j(\bm{k})$ rows of~$\bm{x}$.
Denote by $\bm{y}_{(\bm{k},\ell)}:\bm{x}_{(\bm{k},\ell)}$ the matrix formed by attaching $\bm{x}_{(\bm{k},\ell)}$ to the right of $\bm{y}_{(\bm{k},\ell)}$.
We say that Condition \eqref{da-condition} holds for the pair $(\bm{k},\bm{y}_{(\bm{k})})$ if
\begin{equation} \label{da-condition} \tag{H1}
	r \left(\bm{y}_{(\bm{k},\ell)}:\bm{x}_{(\bm{k},\ell)} \right) = p + d - \ell + 1, \quad
	\sum_{j=1}^\ell n_j(\bm{k}) > p + d - m + \ell - 1 \; \text{ for } \ell \in \{1,\dots,d\}.
\end{equation}
Here, $r(\cdot)$ returns the rank of a matrix.
This condition is crucial for the DA algorithm to be well-defined and implementable.

\subsection{The DA algorithm} \label{ssec:da}

Fix a realized response $\bm{y} \in \mathbb{R}^{n \times d}$ and a realized missing structure $\bm{k} \in \{0,1\}^{n \times d}$.
Given the current state $(\bm{B}(t),\bm{\Sigma}(t)) = (\betareal,\sigmareal)$, the DA algorithm for sampling from $\pi_{\bm{k}}(\cdot \mid \bm{y}_{(\bm{k})})$ draws the next state $(\bm{B}(t+1),\bm{\Sigma}(t+1))$ using the following steps.
\begin{enumerate}
	\item \textbf{I step.} Draw $\bm{W}^* = (W_1^*,\dots,W_n^*)$ from the conditional distribution of $\bm{W}$ given $(\bm{B}, \bm{\Sigma}, \bm{Y}_{(\bm{k})}) = (\betareal, \sigmareal, \bm{y}_{(\bm{k})})$.
	Call the sampled value $\bm{w}$.
	\item \textbf{P step.} Draw $(\bm{B}(t+1), \bm{\Sigma}(t+1))$ from the conditional distribution of $(\bm{B}, \bm{\Sigma})$ given $(\bm{W}, \bm{Y}_{(\bm{k})}) = (\bm{w}, \bm{y}_{(\bm{k})})$.
\end{enumerate}
This algorithm simulates a Markov chain $(\bm{B}(t), \bm{\Sigma(t)})_{t=0}^{\infty}$ that is reversible with respect to $\pi_{\bm{k}}(\cdot \mid \bm{y}_{(\bm{k})})$.

For $i \in \{1, \dots, n\}$, let $d_i$ be the number of nonzero entries in the $i$th row of~$\bm{k}$.
(Note: if $\bm{k}$ is monotone, and the $i$th observation belongs to some pattern~$\ell$, then $d_i = d-\ell+1$.)
One can derive that the conditional distribution of $\bm{W} = (W_1,\dots,W_n)$ given $(\bm{B}, \bm{\Sigma}, \bm{Y}_{(\bm{k})}) = (\betareal, \sigmareal, \bm{y}_{(\bm{k})})$ is
\begin{equation} \label{eq:wconditional}
	P_{{\bm{k}}}(\df \bm{w} \mid \betareal,\sigmareal, \bm{y}_{(\bm{k})}) \propto \prod_{i=1}^n  w_i^{d_i/2} \exp \left( - \frac{r_{i,(\bm{k})} w_i}{2} \right) \, \mix( \df w_i),
\end{equation}
where $r_{i,(\bm{k})}$ is defined in Equation \eqref{eq:r} for $i \in \{1,\dots,n\}$.
To ensure that this conditional density is always proper, we assume throughout that
\begin{equation}\label{hh}\tag{H2}
	\int_{0}^{\infty} w^{d/2} \, \mix(\df w) <\infty.
\end{equation}
The conditional distribution $P_{_{\bm{k}}}(\cdot \mid \betareal, \sigmareal, \bm{y}_{(\bm{k})})$ corresponds to~$n$ independent univariate random variables.
For most commonly used mixing distributions $\mix(\cdot)$, this is not difficult to sample.
For instance, if $\mix(\cdot)$ is a Gamma distribution, $P_{_{\bm{k}}}(\cdot \mid \betareal, \sigmareal, \bm{y}_{(\bm{k})})$ is the product of~$n$ Gamma distributions.

The conditional distribution of $(\bm{B},\bm{\Sigma})$ given $(\bm{W}, \bm{Y}_{(\bm{k})}) = (\bm{w}, \bm{y}_{(\bm{k})})$ is not always tractable.
In fact, since an improper prior is used, this conditional is possibly improper.
\cite{liu1996bayesian} provided a method for sampling from this distribution when~$\bm{k}$ is monotone.
When $\bm{k}$ is monotone, and Condition \eqref{da-condition} holds from $(\bm{y},\bm{k})$, the conditional of $(\bm{B},\bm{\Sigma})$ given $(\bm{W}, \bm{Y}_{(\bm{k})}) = (\bm{w}, \bm{y}_{(\bm{k})})$ is proper for any $\bm{w} = (w_1,\dots,w_n) \in (0,\infty)^n$.
This conditional distribution can be sampled using chi-square and normal distributions. 
The method is intricate, so we relegate the details to Appendix \ref{app:dadetails}.

Let $\bm{k}_0 \in \{0,1\}^{n \times d}$ be the missing structure that corresponds to a completely observable response, i.e., all elements of~$\bm{k}_0$ are~1.
Denote by $\bm{Y}_{(\bm{k}_0-\bm{k})}$ the missing parts in the response. Sometimes, we are interested in the posterior distribution of $\bm{Y}_{(\bm{k}_0-\bm{k})}$ given $\bm{Y}_{(\bm{k})} = \bm{y}_{(\bm{k})}$, which takes the following form:
$$
P_{\bm{Y}_{(\bm{k}_0-\bm{k})}} \left(d\bm{z} \mid \bm{k}, \bm{y}_{(\bm{k})} \right) \propto   d\bm{z} \int_{S_{+}^{d\times d}} \int_{\mathbb{R}^{p\times d}} \int_{(0,\infty)^n} p_{\text{joint}}(\bm{y}^{\bm{z}}, \bm{w}, \betareal, \sigmareal) \mix(d\bm{w}) d\betareal d\sigmareal, 
$$
where $p_{\text{joint}}$ is given in Equation \eqref{eq:fullmodel}, and 
$\bm{y}^{\bm{z}}$ is a realized value of~$\bm{Y}$ such that $\bm{y}^{\bm{z}}_{(\bm{k})} = \bm{y}_{(\bm{k})}$ and $\bm{y}^{\bm{z}}_{(\bm{k}_0 - \bm{k})} = \bm{z}$.
To sample from $P_{\bm{Y}_{(\bm{k}_0-\bm{k})}} \left(\cdot \mid \bm{k}, \bm{y}_{(\bm{k})} \right)$, one can add a post hoc imputation step at the end of each DA iteration.

\begin{enumerate}
\setcounter{enumi}{2}
\item \textbf{Post hoc imputation step.} Draw $\bm{Z}(t+1)$ from 
the conditional distribution of $\bm{Y}_{(\bm{k}_0 - \bm{k})}$ given $(\bm{Y}_{(\bm{k})}, \bm{W}, \bm{B}, \bm{\Sigma}) = (\bm{y}_{(\bm{k})}, \bm{w}, \betareal^*,\sigmareal^*)$, where $(\betareal^*,\sigmareal^*)$ is the sampled value of $(\bm{B}(t+1), \bm{\Sigma}(t+1))$.
\end{enumerate}
Recall that, given $(\bm{W}, \bm{B}, \bm{\Sigma}) = (\bm{w}, \betareal^*,\sigmareal^*)$, $\bm{Y}$ has a multivariate normal distribution.
Then the conditional distribution of $\bm{Y}_{(\bm{k}_0 - \bm{k})}$ given $(\bm{Y}_{(\bm{k})}, \bm{W}, \bm{B}, \bm{\Sigma}) = (\bm{y}_{(\bm{k})}, \bm{w}, \betareal^*,\sigmareal^*)$ is also (univariate or multivariate) normal.
One can show that $(\bm{B}(t), \bm{\Sigma}(t), \bm{Z}(t))_{t=0}^{\infty}$ is a Markov chain whose stationary distribution is the posterior distribution of $(\bm{B}, \bm{\Sigma}, \bm{Y}_{(\bm{k}_0 - \bm{k})})$ given $\bm{Y}_{(\bm{k})} = \bm{y}_{(\bm{k})}$. 

We call the imputation step post hoc because it can be implemented at the end of the whole simulation, as long as the value of $\bm{w}$ is recorded in the I step of each iteration.
Post hoc imputation is possible because the I and P steps do not rely on the value of $\bm{Z}(t)$.
Naturally, post hoc imputation does not affect the convergence properties of the Markov chain.
Indeed, standard arguments \citep{roberts2001markov} show that $(\bm{B}(t), \bm{\Sigma}(t), \bm{Z}(t))_{t=0}^{\infty}$ is geometrically ergodic if and only if $(\bm{B}(t), \bm{\Sigma}(t))_{t=0}^{\infty}$ is geometrically ergodic.
Moreover, the $L^2$ convergence rates of the two chains, as defined in Section~\ref{ssec:convergence}, are the same.
Thus, when studying the convergence properties of the DA algorithm, we can restrict our attention to $(\bm{B}(t), \bm{\Sigma}(t))_{t=0}^{\infty}$ instead of  $(\bm{B}(t), \bm{\Sigma}(t), \bm{Z}(t))_{t=0}^{\infty}$ even if there is post hoc imputation.

\subsection{The DAI algorithm} \label{ssec:dai}

For two realized missing structures $\bm{k} = (k_{i,j}) \in \{0,1\}^{n \times d}$ and $\bm{k}' = (k_{i,j}') \in \{0,1\}^{n \times d}$, write $\bm{k} \prec \bm{k}'$ if (i) $\bm{k} \neq \bm{k}'$ and (ii) $\bm{k}'_{i,j} = 1$ whenever $\bm{k}_{i,j} = 1$.
That is, $\bm{k} \prec \bm{k}'$ if the observed response entries under structure~$\bm{k}$ is a proper subset of those under~$\bm{k}'$.
If $\bm{k} \prec \bm{k}'$, then $\bm{k}' - \bm{k}$ is a missing structure matrix that gives the entries that are missing under~$\bm{k}$, but not under~$\bm{k}'$.
In other words, when $\bm{k} \prec \bm{k}'$, $\bm{Y}_{(\bm{k}'-\bm{k})}$ is observed under~$\bm{k}'$, but not under~$\bm{k}$.

As usual, fix a realized response $\bm{y} \in \mathbb{R}^{n \times d}$ and missing structure~$\bm{k}$.
One may imagine that~$\bm{k}$ is not monotone, and the DA algorithm cannot be implemented efficiently.
The DAI algorithm, also proposed in \cite{liu1996bayesian}, overcomes non-monotinicity by imputing the value of $\bm{Y}_{(\bm{k}'-\bm{k})}$ in the I step, where $\bm{k}'$ is a monotone realized missing structure such that $\bm{k} \prec \bm{k}'$ chosen by the user.
We now describe the details of this algorithm.

The DAI algorithm associated with the triplet $(\bm{y}, \bm{k},\bm{k}')$ simulates a Markov chain $(\bm{B}(t),\bm{\Sigma}(t))_{t=0}^{\infty}$ that is reversible with respect to $\pi_{\bm{k}}(\cdot \mid \bm{y}_{(\bm{k})})$.
Given the current state $(\bm{B}(t),\bm{\Sigma}(t)) = (\betareal,\sigmareal)$, the DAI algorithm draws the next state $(\bm{B}(t+1),\bm{\Sigma}(t+1))$ using the following steps.
\begin{enumerate}
\item \textbf{I1 step.} Draw $\bm{W}^* = (W_1^*,\dots,W_n^*)$ from the conditional distribution of $\bm{W}$ given $(\bm{B}, \bm{\Sigma}, \bm{Y}_{(\bm{k})}) = (\betareal,\sigmareal, \bm{y}_{(\bm{k})})$, whose exact form is given in~ Equation \eqref{eq:wconditional}.
Call the sampled value $\bm{w}$.

\item \textbf{I2 step.} Draw $\bm{Z}(t+1)$ from 
the conditional distribution of $\bm{Y}_{(\bm{k}' - \bm{k})}$ given\\ $(\bm{Y}_{(\bm{k})}, \bm{W}, \bm{B}, \bm{\Sigma}) = (\bm{y}_{(\bm{k})}, \bm{w}, \betareal, \sigmareal)$.
Call the sampled value $\bm{z}$.

\item \textbf{P step.} Draw $(\bm{B}(t+1), \bm{\Sigma}(t+1))$ from the conditional distribution of $(\bm{B}, \bm{\Sigma})$ given $(\bm{w}, \bm{Y}_{(\bm{k})}, \bm{Y}_{(\bm{k}' - \bm{k})}) = (\bm{w}, \bm{y}_{(\bm{k})}, \bm{z})$.
\end{enumerate}

Recall that, given $(\bm{W}, \bm{B}, \bm{\Sigma}) = (\bm{w}, \betareal, \sigmareal)$, $\bm{Y}$ has a multivariate normal distribution.
Then in the I2 step, the conditional distribution of $\bm{Y}_{(\bm{k}' - \bm{k})}$ given $(\bm{Y}_{(\bm{k})}, \bm{W}, \bm{B}, \bm{\Sigma}) = (\bm{y}_{(\bm{k})}, \bm{w}, \betareal, \sigmareal)$ is (univariate or multivariate) normal.
In the P step, one is in fact sampling from the conditional distribution of $(\bm{B}, \bm{\Sigma})$ given $(\bm{W}, \bm{Y}_{(\bm{k}')}) = (\bm{w}, \bm{y}_{(\bm{k}')}^*)$, where $\bm{y}^*$ is a realized value of~$\bm{Y}$ such that $\bm{y}^*_{(\bm{k})} = \bm{y}_{(\bm{k})}$ and $\bm{y}^*_{(\bm{k}' - \bm{k})} = \bm{z}$.
To ensure that this step can be implemented efficiently, we need two conditions: (i) $\bm{k}'$ is monotone as we have assumed, and (ii) Condition \eqref{da-condition} holds for $(\bm{k}',\bm{y}_{(\bm{k}')}^*)$.
If these two conditions hold, then one can use methods in Appendix \ref{app:dadetails} to implement the P step.
Of course, in each step of the DAI algorithm, $\bm{z} = \bm{y}_{(\bm{k}'-\bm{k})}^*$ changes.
Despite this, due to the following result, which is easy to verify, we do not have to check (ii) in every step.

\begin{proposition} \label{pro:dai-welldefined}
Let $\bm{k}'$ be a monotone missing structure such that $\bm{k} \prec \bm{k}'$, and let $\bm{y}^* \in \mathbb{R}^{n \times d}$ be such that $\bm{y}_{(\bm{k})}^* = \bm{y}_{(\bm{k})}$.
Suppose that there exists a monotone structure $\bm{k}''$ such that $\bm{k}'' \prec \bm{k}$, and that Condition \eqref{da-condition} holds for $(\bm{k}'',\bm{y}_{(\bm{k}'')})$.
Then Condition \eqref{da-condition} holds for $(\bm{k}',\bm{y}_{(\bm{k}')}^*)$ regardless of the value of $\bm{y}_{(\bm{k}'-\bm{k})}^*$.
\end{proposition}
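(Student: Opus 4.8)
The plan is to verify the two halves of Condition~\eqref{da-condition}---the rank equalities and the count inequalities---separately, in each case transferring the hypothesis from $\bm{k}''$ to $\bm{k}'$ through the nesting $\bm{k}'' \prec \bm{k} \prec \bm{k}'$ (the last relation being the transitive consequence of the first two). The preliminary observation I would record is that the entries observed under $\bm{k}''$ form a subset of those observed under $\bm{k}$, so that the hypothesis $\bm{y}^*_{(\bm{k})} = \bm{y}_{(\bm{k})}$ forces $y^*_{i,j} = y_{i,j}$ at every position $(i,j)$ with $k''_{i,j}=1$. Since $\bm{x}$ is fully observed, the values of $\bm{y}^*$ and $\bm{x}$ at all entries seen by $\bm{k}''$ agree with those appearing in Condition~\eqref{da-condition} for $(\bm{k}'',\bm{y}_{(\bm{k}'')})$.

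For the count inequalities I would first note that, for a monotone structure, $\sum_{j=1}^\ell n_j$ equals the number of rows whose $\ell$th column is observed, i.e. the number of $1$'s in column $\ell$. Because $\bm{k}'' \prec \bm{k}'$, each column of $\bm{k}'$ has at least as many $1$'s as the corresponding column of $\bm{k}''$, so $\sum_{j=1}^\ell n_j(\bm{k}') \geq \sum_{j=1}^\ell n_j(\bm{k}'')$ for every $\ell$. The count half of Condition~\eqref{da-condition} for $(\bm{k}'',\bm{y}_{(\bm{k}'')})$ then immediately gives the count half for $\bm{k}'$.

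The rank equalities are where the real work lies. Fix $\ell$. I would argue that $\bm{y}_{(\bm{k}'',\ell)} : \bm{x}_{(\bm{k}'',\ell)}$ is, up to a reordering of its rows, a row-submatrix of $\bm{y}^*_{(\bm{k}',\ell)} : \bm{x}_{(\bm{k}',\ell)}$. Indeed, any observation $i$ contributing a row to the former has $k''_{i,\ell}=1$, and monotonicity of $\bm{k}''$ forces $k''_{i,j}=1$ for all $j \geq \ell$; hence columns $\ell,\dots,d$ of row $i$ are $\bm{k}''$-observed, so $y^*_{i,j}=y_{i,j}$ there, while $k'_{i,\ell}=1$ makes $i$ contribute a row to the latter matrix as well. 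The two rows $(y_{i,\ell},\dots,y_{i,d},\bm{x}_i^\top)$ and $(y^*_{i,\ell},\dots,y^*_{i,d},\bm{x}_i^\top)$ therefore coincide, and in particular none of the imputed entries in $\bm{y}^*_{(\bm{k}'-\bm{k})}$ enter the computation---which is precisely why the conclusion is independent of their values. Since passing to a row-submatrix (after any permutation) can only decrease the rank, and Condition~\eqref{da-condition} for $\bm{k}''$ gives $r(\bm{y}_{(\bm{k}'',\ell)} : \bm{x}_{(\bm{k}'',\ell)}) = p+d-\ell+1$, we obtain $r(\bm{y}^*_{(\bm{k}',\ell)} : \bm{x}_{(\bm{k}',\ell)}) \geq p+d-\ell+1$. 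As this matrix has exactly $p+d-\ell+1$ columns, its rank cannot exceed that number, so equality holds.

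The only genuinely delicate point is the row-matching in the rank step: the monotone arrangements of $\bm{k}''$ and $\bm{k}'$ may order their observations differently, but since rank is invariant under row permutation and the relevant rows carry identical entries (thanks to $\bm{y}^*=\bm{y}$ on the $\bm{k}''$-observed positions and the full observation of $\bm{x}$), the submatrix relationship---and hence the rank comparison---holds regardless of ordering. Everything else reduces to the monotonicity of column counts and the elementary fact that a matrix with $c$ columns has rank at most $c$.
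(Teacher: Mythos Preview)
Your argument is correct. The paper does not actually supply a proof of this proposition; it merely introduces it as a ``result, which is easy to verify,'' and leaves the details to the reader. Your proposal fills in precisely those details along the natural route: transferring both halves of Condition~\eqref{da-condition} from $\bm{k}''$ to $\bm{k}'$ via the nesting $\bm{k}''\prec\bm{k}'$, using column-count monotonicity for the inequalities and a row-submatrix argument for the rank equalities. One minor simplification: because the paper defines monotonicity with respect to a \emph{fixed} row ordering (no re-indexing is performed), the rows indexed $1,\dots,\sum_{j=1}^\ell n_j(\bm{k}'')$ are literally the first $\sum_{j=1}^\ell n_j(\bm{k}'')$ rows of $\bm{y}^*_{(\bm{k}',\ell)}:\bm{x}_{(\bm{k}',\ell)}$, so the row-permutation caveat you flag is not actually needed---though invoking it does no harm.
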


The DAI algorithm can be used to impute the missing values of $\bm{Y}$.
Indeed, $(\bm{B}(t), \bm{\Sigma}(t), \bm{Z}(t))_{t=0}^{\infty}$ is a Markov chain whose stationary distribution is the posterior distribution of $(\bm{B}, \bm{\Sigma}, \bm{Y}_{(\bm{k}' - \bm{k})})$ given $\bm{Y}_{(\bm{k})} = \bm{y}_{(\bm{k})}$. 
This is similar to the DA algorithm with post hoc imputation described in Section~\ref{ssec:da}, especially if all the entries of $\bm{k}'$ are 1.
Compared to the DA algorithm with post hoc imputation, the DAI algorithm can be implemented even when~$\bm{k}$ is not monotone.
However, the intermediate imputation step I2 cannot be performed after the whole simulation process since the P step of DAI relies on the value of $\bm{Z}(t+1)$.

Standard arguments show that $(\bm{B}(t), \bm{\Sigma}(t), \bm{Z}(t))_{t=0}^{\infty}$ and $(\bm{B}(t), \bm{\Sigma}(t))_{t=0}^{\infty}$ have the same convergence rate in terms of total variation and $L^2$ distances \citep{roberts2001markov,liu1994covariance}.
Thus, when studying the convergence properties of the DAI algorithm, we can restrict our attention to $(\bm{B}(t), \bm{\Sigma}(t))_{t=0}^{\infty}$ instead of  $(\bm{B}(t), \bm{\Sigma}(t), \bm{Z}(t))_{t=0}^{\infty}$ even if we care about imputing missing data.

\section{Convergence Analysis}\label{sec:ca}
\subsection{Preliminaries}\label{ssec:convergence}

We start by reviewing some general concepts regarding the convergence properties of Markov chains.
Let $(\mathcal{X}, \mathcal{F})$ be a measurable space.
Consider a Markov chain $(X(t))_{t=0}^{\infty}$ whose state space is $(\mathcal{X}, \mathcal{F})$, and let $K: \mathcal{X} \times \mathcal{F} \to [0,1]$ be its transition kernel.
For a signed measure~$\mu$ and a measurable function~$f$ on $(\mathcal{X},\mathcal{F})$,
let
\[
\mu f = \int_{\mathcal{X}} f(x) \, \mu(\df x),
\] 
and 
$K$ can act on~$\mu$ and~$f$ as follows: $$\mu K(A) = \int_{\mathcal{X}} K(x,A) \, \mu(dx), \; A\in \mathcal{F}, \quad Kf(x) = \int_{\mathcal{X}} f(y)K(x, dy), \; x\in\mathcal{X},$$
assuming that the integrals are well-defined.
Suppose that the chain has a stationary distribution~$\pi$, i.e., $\pi K(\cdot) = \pi(\cdot)$.
The chain is said to be reversible if, for $A_1,A_2 \in \mathcal{F}$,
\[
\int_{A_1 \times A_2} \pi(\df x) K(x, \df y) = \int_{A_2 \times A_1} \pi(\df x) K(x, \df y).
\]


For two probability measures $\mu_1$ and $\mu_2$ on $(\mathcal{X}, \mathcal{F})$, denote $||\mu_1(\cdot) - \mu_2(\cdot)||_{TV}$ as the  total variation (TV) distance between the two probability measures. 
For $t \in \mathbb{Z}_+$, let $K^t: \mathcal{X} \times \mathcal{F} \to [0,1]$ be the $t$-step  transition kernel of the chain, so that $K^1 = K$, and $\mu K^{t+1} (\cdot ) = (\mu K^t ) K(\cdot)$ for any probability measure~$\mu$ on $(\mathcal{X},\mathcal{F})$.
A $\phi$-irreducible aperiodic Markov chain with stationary distribution $\pi$ is \textit{Harris ergodic} if and only if 
$\lim_{t\to \infty }|| K^t(x,\cdot) - \pi(\cdot)||_{TV} = 0$, for all $x\in\mathcal{X}$ \citep{meyn2005markov,roberts2006harris}. 
The chain is said to be \textit{geometrically ergodic} if the chain is Harris ergodic and
\begin{equation}\label{ergodic}
|| K^t(x,\cdot) - \pi(\cdot)||_{TV}\leq M(x) \rho^t, \quad x \in \mathcal{X}, \; t\in \mathbb{Z}^+,
\end{equation}
for some $\rho\in[0,1)$ and $M: \mathcal{X} \to [0,\infty)$.
As mentioned in the Introduction, Harris ergodicity guarantees a law of large numbers for ergodic averages \citep[Theorem 17.1.7]{meyn2005markov}, and geometric ergodicity guarantees a central limit theorem for ergodic averages \citep[]{jones2001honest,jones2004markov,flegal2008markov}.

Another commonly used distance between probability measures is the $L^2$ distance.
Let $L^2(\pi)$ be the set of measurable functions $f: \mathcal{X} \to \mathbb{R}$ such that
\[
\pi f^2 := \int_{\mathcal{X}} f(x)^2 \, \pi(\df x) < \infty.
\]
The $L^2$ distance between two probability measures $\mu_1$ and $\mu_2$ on $(\mathcal{X}, \mathcal{F})$   is 
\[
\|\mu_1(\cdot) - \mu_2(\cdot) \|_2 = \sup_{f \in L^2(\pi)} |\mu_1 f - \mu_2 f|.
\]
Denote by $L_*^2(\pi)$ the set of probability measures~$\mu$ on $(\mathcal{X},\mathcal{F})$ such that~$\mu$ is absolutely continuous to~$\pi$ and that $\df \mu / \df \pi$, the density of~$\mu$ with respect to~$\pi$, is in $L^2(\pi)$.
We say the chain is $L^2$ geometrically ergodic if
\begin{equation} \label{ine:l2geo}
\| \mu K^t (\cdot) - \pi(\cdot) \|_2 \leq C(\mu) \rho^t, \quad \mu \in L_*^2(\pi), \; t\in \mathbb{Z}^+,
\end{equation}
for some $\rho \in [0,1)$ and $C: L_*^2(\pi) \to [0,\infty)$.  In some cases, e.g., when the state space is finite, the~$\rho$s in~ Equations \eqref{ergodic} and~\eqref{ine:l2geo} are exchangeable. For reversible Markov chains, geometric ergodicity implies $L^2$ geometric ergodicity \citep[Theorem 2.1 and Remark 2.3]{roberts1997geometric}.

The infimum of $\rho \in [0,1]$ such that Equation \eqref{ine:l2geo} holds for some $C: L_*^2(\pi) \to [0,\infty)$ is called the $L^2$ convergence rate of the chain.
The smaller this rate is, the faster the convergence.

\subsection{Geometric ergodicity of the DA algorithm}\label{ssec:geo}

To state our result, we define three classes of mixing distributions based on their behaviors near the origin.
These classes were first examined by \cite{hobert2018convergence} who analyzed the DA algorithm when the response matrix is fully observed.
We say that the mixing distribution $\mix(\cdot)$ is \textit{zero near the origin} if there exists $ \theta>0$ such that$\int_{0}^{\theta} \mix(\df w) = 0$.
Assume now that $\mix(\cdot)$ admits a density function $\mixdensity: (0,\infty) \to [0,\infty)$ with respect to the Lebesgue measure.
If there exists $ c>-1$, such that $\lim_{w\to 0 } {\mixdensity(w)}/{w^c}<\infty$, we say that $\mix(\cdot)$ is \textit{polynomial near the origin} with power~$c$. The mixing distribution $\mix({\cdot})$ is \textit{faster than polynomial near the origin} if, for all $ c>0$, there exists $ \kappa_{c}>0$ such that the ratio ${\mixdensity(w)}/{w^c}$ is strictly increasing on $(0,\kappa_{c})$.

Most commonly used mixing distributions fall into one of these three classes. 
Examples will be given after we state the main result of this section.

Again, fix an observed missing structure $\bm{k}$ and observed response $\bm{y}_{(\bm{k})}$.
Let $d_i$ be the number of nonzero elements in the $i$th row of~$\bm{k}$.
Recall that~$n$ is the number of observations,~$p$ is the number of predictors,~$d$ is the dimension of the responses, and~$m$ is a parameter in the prior distribution Equation \eqref{prior}.

\begin{theorem}\label{thm:da}
Consider the DA algorithm targeting $\pi_{\bm{k}}(\cdot \mid \bm{y}_{(\bm{k})})$, as described in Section~\ref{ssec:da}.
Suppose that Condition \eqref{hh} holds, and that the conditional distribution of $(\bm{B},\bm{\Sigma})$ given $(\bm{W}, \bm{Y}_{(\bm{k})}) = (\bm{w}, \bm{y}_{(\bm{k})})$ is proper for every $\bm{w} = (w_1,\dots,w_n)^\top \in (0,\infty)^n$.
If any one of the following conditions holds, then the posterior $\pi_{\bm{k}}(\cdot \mid \bm{y}_{(\bm{k})})$ is proper and the underlying Markov chain is geometrically ergodic. 
\begin{enumerate}\label{ta1}
	\item $\mix(\cdot)$ is zero near the origin;
	\item $\mix(\cdot)$ is faster than polynomial near the origin; or
	\item $\mix(\cdot)$ is polynomial near the origin with power $c> c_1$, where
	\[
	c_1=\frac{n-p+m-\min\{d_1,\dots,d_n\}}{2}.
	\]
\end{enumerate}
\end{theorem}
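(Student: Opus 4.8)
The plan is to establish a geometric drift condition together with an associated minorization on sublevel sets for the transition kernel $K$ of the $(\bm{B},\bm{\Sigma})$-chain, and then conclude geometric ergodicity by the standard drift-and-minorization route. A key preliminary point is that, under \eqref{hh} and the stated properness hypothesis on the P-step conditional, both the I step and the P step are bona fide proper Markov kernels, so $K$ is a genuine Markov kernel even before we know that $\pi_{\bm{k}}(\cdot\mid\bm{y}_{(\bm{k})})$ is a proper probability measure. I would therefore verify a drift inequality $KV \le \gamma V + b$ with $\gamma\in[0,1)$, $b<\infty$, and $V$ unbounded off petite sets, for $K$ alone; such a condition forces $K$ to be positive Harris recurrent with a \emph{finite} invariant measure. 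Since $\pi_{\bm{k}}(\cdot\mid\bm{y}_{(\bm{k})})$ is (formally) invariant for $K$, finiteness of the invariant measure then yields posterior propriety as a by-product, exactly as the theorem asserts.

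My central device is the reduction to the fully observed case. If convenient, one may analyze the conjugate $\bm{W}$-chain instead, which has the same convergence behavior \citep{liu1994covariance}, and phrase the drift on the residual statistics $r_{i,(\bm{k})}$ of \eqref{eq:r}. The point of the reduction is that the I-step conditional \eqref{eq:wconditional} for the monotone incomplete problem has exactly the same functional form as in the fully observed analysis of \citet{hobert2018convergence}, except that the common response dimension $d$ is replaced by the per-observation dimension $d_i$. Thus a suitable separable function of the $r_{i,(\bm{k})}$ should again serve as $V$, and computing $KV$ reduces to controlling, for each $i$, the conditional moments of $W_i$ as the residual grows, i.e.\ ratios
\[
\frac{\int_0^\infty w^{d_i/2 + s}\, e^{-r w/2}\, \mix(\df w)}{\int_0^\infty w^{d_i/2}\, e^{-r w/2}\, \mix(\df w)}
\]
in the regime $r\to\infty$, together with the complementary small-$r$ behavior fed through the weighted-regression P step.

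The three cases correspond precisely to the three ways of controlling these integrals as $r\to\infty$, where $e^{-rw/2}$ concentrates all mass near the origin so that the near-origin behavior of $\mix$ dictates the decay rate. If $\mix$ is zero near the origin, the weights $W_i$ are bounded away from $0$ and the ratios are trivially controlled; if $\mix$ is faster than polynomial near the origin, a monotonicity comparison gives super-polynomial decay, again yielding a strong drift. In the polynomial case a Laplace/Watson-type estimate gives $\int_0^\infty w^{a} e^{-rw/2}\mix(\df w) \asymp r^{-(a+c+1)}$, so the conditional moments of $W_i$ decay at an explicit polynomial rate in $r$; aggregating this downweighting over all $n$ observations and matching it against the effective degrees of freedom $n-p+m$ introduced by the P step produces the threshold, with the slowest-decaying (hence binding) contribution set by the smallest $d_i$, which is why $c_1$ features $\min\{d_1,\dots,d_n\}$. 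The minorization I would verify separately: on a sublevel set $\{V\le L\}$ the residuals are bounded, the I-step density \eqref{eq:wconditional} is bounded below, and the monotone P-step kernel of \citet{liu1996bayesian} (valid since $\bm{k}$ is monotone and the P-step conditional is proper by hypothesis) admits a uniform lower bound, giving a one-step minorization on petite sets.

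The main obstacle I anticipate is the heterogeneity of the response dimensions $d_i$ induced by the monotone pattern: different observations contribute integrals of different polynomial orders, and I must show the drift can be closed with a single $\gamma<1$ governed by the worst, smallest-dimension pattern, thereby justifying $\min\{d_1,\dots,d_n\}$ rather than some average. A secondary technical point is that $\sigmareal$ enters the residuals only through the submatrices $\bm{c}_{i,(\bm{k})}\sigmareal\bm{c}_{i,(\bm{k})}^\top$, so the estimates controlling the P step must be carried out on these marginal scatter matrices; here the monotone structure and Condition \eqref{da-condition}, which guarantee that each nested sub-design has full rank, are precisely what make the transfer of the fully observed tools legitimate.
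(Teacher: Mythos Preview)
Your plan is essentially the paper's: drift-and-minorization with drift function $V(\betareal,\sigmareal)=\sum_{i=1}^n r_{i,(\bm{k})}$, minorization on sublevel sets via a uniform lower bound on the I-step density, and the same three-way split on the near-origin behavior of $\mix$. Two points deserve sharpening.

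First, your ``reduction to the fully observed case'' is left vague, and the P step under a monotone pattern (Appendix~\ref{app:dadetails}) is complicated enough that a direct moment computation on the marginal scatter matrices $\bm{c}_{i,(\bm{k})}\sigmareal\bm{c}_{i,(\bm{k})}^\top$ is not straightforward. The paper's device is a tower-property step: write $Q_{\bm{k}}(\cdot\mid\bm{w},\bm{y}_{(\bm{k})})$ as an integral, over the unobserved entries $\bm{Y}_{(\bm{k}_0-\bm{k})}$, of the \emph{complete-data} conditional of $(\bm{B},\bm{\Sigma})$ given $(\bm{W},\bm{Y})$. The inner expectation then uses only standard inverse-Wishart and matrix-normal moments, and the quadratic-form inequality ($\bm{\upsilon}^\top\bm{\varphi}^{-1}\bm{\upsilon}\le 1$ whenever $\bm{\varphi}-\bm{\upsilon}\bm{\upsilon}^\top$ is positive semidefinite) yields bounds $\le (n-p+m)/w_i$ that are \emph{uniform in the imputed values}, so the outer integration over $\bm{Y}_{(\bm{k}_0-\bm{k})}$ is free. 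Your sketch should make this mechanism explicit; it is the step that actually justifies the ``transfer of the fully observed tools.''

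Second, the I-step control is through the \emph{negative} moment $E[W_i^{-1}\mid r_{i,(\bm{k})}=r]$, i.e.\ the ratio with $s=-1$ in your display. In the polynomial case this ratio \emph{grows} linearly in $r$ with slope $1/(2c+d_i)$, it does not decay; the drift closes because $(n-p+m)\cdot\max_i 1/(2c+d_i)<1$ precisely when $c>c_1$, which is where $\min_i d_i$ enters. Your description of moments ``decaying'' is the wrong direction and would not produce the stated threshold.
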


\begin{remark}
Recall that when $\bm{k}$ is monotone, the conditional distribution of $(\bm{B},\bm{\Sigma})$ given $(\bm{W}, \bm{Y}_{(\bm{k})}) = (\bm{w}, \bm{y}_{(\bm{k})})$ is proper for every $\bm{w} = (w_1,\dots,w_n)^\top \in (0,\infty)^n$ if Condition \eqref{da-condition} holds for $(\bm{k}, \bm{y}_{(\bm{k})})$.
\end{remark}

\begin{remark}
When $\mix(\cdot)$ has a density function with respect to the Lebesgue measure and $\bm{Y}$ is fully observed, Theorem~\ref{thm:da} reduces to the main result in \cite{hobert2018convergence}.
\end{remark}

The proof of Theorem \ref{thm:da} is in Appendix \ref{app:dageo}.
In what follows, we list commonly used mixing distributions that fall into the three categories in Theorem~\ref{thm:da}.
We also check whether each mixing distribution satisfies Condition \eqref{hh}.

\noindent\textbf{Zero near the origin}

When the mixing distribution $\mix(\cdot)$ is discrete with finite support, the mixing distribution is zero near the origin. This is the case when errors follow finite mixtures of Gaussian. 
Obviously, Condition \eqref{hh} holds in this case.

The Pareto$(a,b)$ distribution has density $p(w \mid a,b) \propto w^{-b-1},\; w \in[a,\infty)$, where $a>0, b>0$. It is zero near the origin as the support is $[a,\infty)$.
Condition \eqref{hh} holds if $b > d/2$.

\noindent\textbf{Faster than polynomial near the origin}

A generalized inverse Gaussian distribution
$\mbox{GIG}(a,b,q)$ with density 
\[
p(w \mid a, b, q) \propto w^{q-1} \exp \left( -\frac{aw + b/w}{2} \right),
\]
where $a>0, b>0, q\in \mathbb{R}$, is  faster than polynomial near the origin.  
Condition \eqref{hh} holds for any GIG distribution.
When the mixing distribution is GIG, the distribution of the error is called Generalized Hyperbolic \citep{barndorff1982normal}.

The density of an inverse gamma distribution $\mbox{IG}(a,b)$ is $p(w \mid a,b) \propto w^{-a-1}\exp({-b/w})$, where  $a>0, b>0$.
$\mbox{IG}(a,b)$ is faster than polynomial near the origin.
Condition \eqref{hh} is satisfied if $a > d/2$.

For $\mu \in \mathbb{R}$ and $v > 0$, the $\mbox{Log-normal}(\mu, v)$ distribution has density
\[
p(w\mid \mu, v) \propto \frac{1}{w} \exp \left\{-\frac{(\ln w-\mu)^2}{2v^2} \right\}.
\]
This distribution is faster than polynomial near the origin and Condition \eqref{hh} holds.

A Fréchet distribution with the shape $\alpha > 0$ and scale $s > 0$   is given by 
\[
p(w\mid \alpha, s) \propto w^{-(1+\alpha)} \exp\left\{- \left({s}/{w} \right)^{\alpha} \right\}.
\]
It is faster than polynomial near the origin.
Moreover, Condition \eqref{hh} holds whenever $\alpha>d/2$.

\noindent\textbf{Non-zero near the origin and polynomial near the origin}

A $\mbox{Gamma}(a,b)$ distribution has density $p(w \mid a,b) \propto w^{a-1} \exp({-bw})$, where $a>0$ and $b>0$.
$\mbox{Gamma}(a,b)$ is polynomial near the origin with power $c = a-1$. 
The power
$c > c_1$ if
\[
a > \frac{n-p+m-\min\{d_1,\dots,d_n\}}{2} + 1,
\]
where $c_1$ is given in Theorem~\ref{thm:da}.
Condition \eqref{hh} always holds for Gamma distributions.
In particular, when $a=b=v/2$, the error has multivariate $t$ distribution with degrees of freedom $v$, and $c > c_1$ if
\[
v > n-p+m-\min\{d_1,\dots,d_n\} + 2.
\]


The $\mbox{Beta}(a,b)$ has density $p(u\mid a,b) \propto u^{a-1} (1-x)^{b-1},\ u\in(0,1)$, where $a>0$ and $b>0$. 
When $b=1$, the error is called multivariate slash distribution \citep{lange1993normal,1972}.
$\mbox{Beta}(a,b)$ is polynomial near the origin with power $c = a-1$. 
Condition \eqref{hh} always holds for Beta distributions.

The $\mbox{Weibull}(a,b)$ distribution has density  $p(u\mid a,b) \propto u^{a-1}\exp\{-(u/b)^a\}$, where $a>0$ and $b>0$. $\mbox{Weibull}(a,b)$ is polynomial near the origin with power $c = a-1$. 
Condition \eqref{hh} always holds for Weibull distributions.

The $F(a,b)$ distribution has density $p(w \mid a,b) \propto w^{a/2-1}(aw +b)^{-(a+b)/2}$, where $a>0$ and $b>0$.
$F(a,b)$ is polynomial near the origin with power $c = a/2-1$. The power
$c > c_1$ if
\[
a > n - p + m - \min \{d_1,\dots,d_n\} + 2.
\]
Condition \eqref{hh} is satisfied if $b > d$.

\subsection{Harris ergodicity of the DAI algorithm}\label{ssec:harris}




Now we give sufficient conditions for the DAI algorithm to be Harris ergodic.

\begin{theorem}\label{thm:dai}
Assume that Condition \eqref{hh} holds.
Let $\bm{k}$ be a missing structure.
Suppose that there is a missing structure $\bm{k}'$ and a realized value of $\bm{Y}_{(\bm{k}')}$ denoted by~$\bm{z}$ such that $\bm{k}' \prec \bm{k}$, and that $\pi_{\bm{k}'}(\cdot \mid \bm{z})$, the conditional density of $(\bm{B}, \bm{\Sigma})$ given $\bm{Y}_{(\bm{k}')} = \bm{z}$, is proper.
Then, for Lebesgue almost every~$\bm{z}'$ in the range of $\bm{Y}_{(\bm{k} - \bm{k}')}$, the posterior density $\pi_{\bm{k}}(\cdot \mid \bm{y}_{(\bm{k})})$, where~$\bm{y}$ satisfies $\bm{y}_{(\bm{k}')} = \bm{z}$ and $\bm{y}_{(\bm{k} - \bm{k}')} = \bm{z}'$, is proper, and any DAI chain targeting this posterior is Harris recurrent.
\end{theorem}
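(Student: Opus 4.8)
The plan is to prove the two assertions — propriety of $\pi_{\bm{k}}(\cdot \mid \bm{y}_{(\bm{k})})$ for almost every completion $\bm{z}'$, and Harris recurrence of the associated DAI chain — in that order, using the first to supply the proper invariant probability measure that the recurrence argument requires. Throughout I will write $\bm{k}^\dagger$ for the monotone structure with $\bm{k} \prec \bm{k}^\dagger$ used by the DAI algorithm, to avoid a clash with the subset structure $\bm{k}'$ (with $\bm{k}' \prec \bm{k}$) appearing in the hypothesis.

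For propriety I would work with the (possibly improper) prior--predictive normalizing integral
\[
m_{\bm{k}}(\bm{y}_{(\bm{k})}) = \int_{S_+^{d\times d}} \int_{\mathbb{R}^{p\times d}} p_{\scriptsize\mbox{prior}}(\betareal,\sigmareal) \prod_{i=1}^n \int_0^\infty \frac{w^{d_i/2}}{|\bm{c}_{i,(\bm{k})}\sigmareal\bm{c}_{i,(\bm{k})}^\top|^{1/2}} \exp\!\left(-\tfrac{r_{i,(\bm{k})}w}{2}\right) \mix(\df w)\, \df\betareal\,\df\sigmareal,
\]
so that propriety of $\pi_{\bm{k}}(\cdot\mid\bm{y}_{(\bm{k})})$ is exactly $m_{\bm{k}}(\bm{y}_{(\bm{k})})<\infty$. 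Because $\bm{k}'\prec\bm{k}$, the entries $\bm{Y}_{(\bm{k}-\bm{k}')}$ are observed under $\bm{k}$ but integrated out under $\bm{k}'$, so by Tonelli (all integrands are nonnegative) one obtains the marginalization identity $\int m_{\bm{k}}(\bm{z},\bm{z}')\,\df\bm{z}' = m_{\bm{k}'}(\bm{z})$, where $m_{\bm{k}}(\bm{z},\bm{z}')$ denotes the marginal at the completion $\bm{y}_{(\bm{k}')}=\bm{z}$, $\bm{y}_{(\bm{k}-\bm{k}')}=\bm{z}'$. The hypothesis that $\pi_{\bm{k}'}(\cdot\mid\bm{z})$ is proper is precisely $m_{\bm{k}'}(\bm{z})<\infty$; hence the nonnegative map $\bm{z}'\mapsto m_{\bm{k}}(\bm{z},\bm{z}')$ has finite integral and is therefore finite for Lebesgue-almost every $\bm{z}'$. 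This delivers propriety of $\pi_{\bm{k}}(\cdot\mid\bm{y}_{(\bm{k})})$, and thus a proper invariant probability measure for the chain, for almost every $\bm{z}'$.

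For Harris recurrence I would fix such a $\bm{z}'$ and show that the marginal DAI chain on $(\bm{B},\bm{\Sigma})$ has a transition density with respect to Lebesgue measure on $\mathbb{R}^{p\times d}\times S_+^{d\times d}$ that is strictly positive everywhere. Writing the kernel as
\[
k\big((\betareal,\sigmareal),(\betareal',\sigmareal')\big) = \int\!\!\int p_P\big(\betareal',\sigmareal'\mid\bm{w},\bm{y}^{*}_{(\bm{k}^\dagger)}\big)\, q(\df\bm{z}\mid \bm{w},\betareal,\sigmareal)\, P_{\bm{k}}(\df\bm{w}\mid\betareal,\sigmareal,\bm{y}_{(\bm{k})}),
\]
where $q$ is the I2 imputation law and $p_P$ the P-step density, I would argue as follows: Condition \eqref{hh} makes the I1 law proper with support equal to that of $\mix$, the I2 law is Gaussian, and the P-step is the \citet{liu1996bayesian} draw of Appendix \ref{app:dadetails}, built from chi-square and normal variates and hence possessing a density strictly positive on all of $\mathbb{R}^{p\times d}\times S_+^{d\times d}$. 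The P-step is legitimate for every imputed value because the propriety of $\pi_{\bm{k}'}(\cdot\mid\bm{z})$ for the monotone substructure $\bm{k}'$ is tantamount to \eqref{da-condition} holding for $(\bm{k}',\bm{z})$, and Proposition \ref{pro:dai-welldefined} then propagates \eqref{da-condition} to $(\bm{k}^\dagger,\bm{y}^*)$ irrespective of $\bm{z}$. Integrating the everywhere-positive $p_P$ against the I1 and I2 laws yields $k>0$ everywhere.

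Finally I would convert strict positivity into Harris recurrence. Positivity of $k$ makes the chain Lebesgue-irreducible and aperiodic, and the existence of the proper invariant measure $\pi_{\bm{k}}$ then gives positive recurrence. Since $k$ is a Lebesgue density and $\pi_{\bm{k}}$ is equivalent to Lebesgue on its support, one has $K\big((\betareal,\sigmareal),\cdot\big)\ll\pi_{\bm{k}}$ for \emph{every} state, so the chain escapes any $\pi_{\bm{k}}$-null set after a single step; the standard upgrade from recurrence to Harris recurrence then applies (see \citep{meyn2005markov,roberts2006harris}). I expect the main obstacle to be the clean verification that the P-step density is positive on the whole cone $S_+^{d\times d}$ — tracking through the chi-square/normal construction of Appendix \ref{app:dadetails} to confirm full support — together with making the ``almost every $\bm{z}'$'' bookkeeping of the propriety step interlock with the everywhere-valid well-definedness of the P-step.
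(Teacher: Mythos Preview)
Your propriety argument is correct and is essentially the paper's argument made explicit: the paper phrases it as ``$\pi_{\bm{k}}(\cdot\mid\bm{y}_{(\bm{k})})$ is the posterior of $(\bm{B},\bm{\Sigma})$ given $\bm{Y}_{(\bm{k}-\bm{k}')}=\bm{z}'$ when the prior is the proper distribution $\pi_{\bm{k}'}(\cdot\mid\bm{z})$, hence proper for almost every $\bm{z}'$,'' which is exactly your Tonelli identity $\int m_{\bm{k}}(\bm{z},\bm{z}')\,\df\bm{z}'=m_{\bm{k}'}(\bm{z})<\infty$.

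For Harris recurrence the paper takes a much shorter route than you do: once propriety and Condition~\eqref{hh} are in hand, it simply invokes Theorem~6(v) of \citet{roberts2006harris}, which directly yields Harris recurrence for this type of two-block sampler. Your strategy (strict positivity of the transition density $\Rightarrow$ Lebesgue-irreducibility and aperiodicity, then upgrade to Harris recurrence via absolute continuity $K((\betareal,\sigmareal),\cdot)\ll\pi_{\bm{k}}$) is a valid alternative and is essentially what that cited result encapsulates, so the two approaches agree in substance.

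There is, however, a genuine gap in your argument. You assert that ``the propriety of $\pi_{\bm{k}'}(\cdot\mid\bm{z})$ for the monotone substructure $\bm{k}'$ is tantamount to \eqref{da-condition} holding for $(\bm{k}',\bm{z})$,'' and then feed this into Proposition~\ref{pro:dai-welldefined}. This fails on two counts: the hypothesis of the theorem does \emph{not} assume $\bm{k}'$ is monotone, and even if it were, propriety of $\pi_{\bm{k}'}(\cdot\mid\bm{z})$ is not equivalent to \eqref{da-condition} --- the latter (plus conditions on $\mix$) is merely sufficient for the former via Theorem~\ref{thm:da}, with no converse available. So your route to well-definedness of the P-step does not go through. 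The paper avoids this entirely: the phrase ``any DAI chain targeting this posterior'' already presupposes that the chain is well-defined (i.e., that the user has chosen $\bm{k}^\dagger$ so that the P-step conditional is proper), and the proof proceeds from there. If you drop the attempt to \emph{derive} well-definedness and instead take it as given, the remainder of your positivity argument is sound.
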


\begin{proof}
The posterior density $\pi_{\bm{k}}(\cdot \mid \bm{y}_{(\bm{k})})$ can be regarded as the posterior density of $(\bm{\beta}, \bm{\Sigma})$ given $\bm{Y}_{(\bm{k} - \bm{k}')} = \bm{z}'$ when the prior density is $\pi_{\bm{k}'}(\cdot \mid \bm{z})$.
Since $\pi_{\bm{k}'}(\cdot \mid \bm{z})$ is assumed to be proper, this posterior is proper for almost every possible value of $\bm{z}'$.

Under Condition \eqref{hh} and posterior propriety, a simple application of Theorem 6(v) of \cite{roberts2006harris} shows that a DAI chain targeting the posterior is Harris recurrent.
\end{proof}

By Theorem~\ref{thm:da}, a posterior density $\pi_{\bm{k}'}(\cdot \mid \bm{y}_{(\bm{k}')})$ is proper if each of the following conditions holds:
\begin{itemize}
\item $\bm{k}'$ is monotone, and Condition \eqref{da-condition} holds for $(\bm{k}', \bm{y}_{(\bm{k}')})$;
\item $\mix(\cdot)$ satisfies Condition \eqref{hh}; or
\item $\mix(\cdot)$ is either zero near the origin, or faster than polynomial near the origin, or polynomial near the origin with power $c > c_1$, where
\[
c_1=\frac{n-p+m-\min\{d'_1,\dots,d'_n\}}{2},
\] and for $i\in\{1,\dots,n\}$, $d'_i$ is the number of nonzero entries in the $i$th row of $\bm{k}'$.
\end{itemize}
By Theorem~\ref{thm:dai}, under Condition \eqref{hh}, to ensure Harris ergodicity of a DAI algorithm targeting $\pi_{\bm{k}}(\cdot \mid \bm{y}_{(\bm{k})})$ in an almost sure sense, one only needs to find a missing structure $\bm{k}' \prec \bm{k}$ that satisfies the conditions above.
In theory, when such a $\bm{k}'$ exists, it is still possible that the posterior $\pi_{\bm{k}}(\cdot \mid \bm{y}_{(\bm{k})})$ is improper -- even though it only happens on a zero measure set.
See \cite{fernandez1999multivariate} for a detailed discussion on this subtlety.


\subsection{Comparison between the DA and DAI algorithms}\label{ssec:compare}
Again, fix a missing structure $\bm{k}$ and a realized response $\bm{y}$, and let $d_i$ be the number of nonzero elements in the $i$th row of $\bm{k}$.
Assume that there is at least one missing entry, i.e., $d_i < d$ for some $i \in \{1,\dots,n\}$.
Assume also that $\pi_{\bm{k}}(\cdot \mid \bm{y}_{(\bm{k})})$ is proper.
In principle, one can either use the DA algorithm (with or without post hoc imputation) or the DAI algorithm associated with $(\bm{y}, \bm{k},\bm{k}')$ where $\bm{k} \prec \bm{k}'$ to sample from the posterior.
In this subsection, we compare the two algorithms in terms of their $L^2$ convergence rates.
This comparison is important when $\bm{k}$ is monotone, and Conditions \eqref{da-condition} and \eqref{hh} hold.
In this case, both algorithms can be efficiently implemented.

Let~$S$ and~$T$ be two random elements, defined on the measurable spaces $(\mathcal{X}_1, \mathcal{F}_1)$ and $(\mathcal{X}_2, \mathcal{F}_2)$ respectively.
Denote by~$\pi$ the marginal distribution of~$S$.
A generic data augmentation algorithm for sampling from~$\pi$ simulates a Markov chain $(S(t))_{t=0}^{\infty}$ that is reversible to~$\pi$.
Given the current state $S(t)$, the next state $S(t+1)$ is generated through the following procedure.
\begin{enumerate}
\item \textbf{I step.} Draw $T^*$ from the conditional distribution of~$T$ given $S = S(t)$.
Call the observed value $t^*$.

\item \textbf{P step.} Draw $S(t+1)$ from the conditional distribution of~$S$ given $T = t^*$.
\end{enumerate}
The DA and DAI algorithms for Bayesian robust linear regression are special cases of the above method.
Indeed, let~$S_1$,~$T_1$, and~$T_2$ be three random elements such that
the joint distribution of~$S_1$,~$T_1$, and~$T_2$ is the conditional joint distribution of $(\bm{B},\bm{\Sigma})$,~$\bm{W}$, and $\bm{Y}_{(\bm{k}'-\bm{k})}$ given $\bm{Y}_{(\bm{k})} = \bm{y}_{(\bm{k})}$.
Then, taking $S = S_1$ and $T=T_1$ in the generic algorithm yields the DA algorithm; taking $S=S_1$ and $T = (T_1,T_2)$ yields the DAI algorithm associated with $(\bm{y}, \bm{k},\bm{k}')$.

The $L^2$ convergence rate of the generic data augmentation chain is precisely the squared maximal correlation between~$S$ and~$T$ \citep{liu1994covariance}.
The maximal correlation between~$S$ and~$T$ is
\[
\gamma(S,T) := \sup \mbox{corr} [ f(S), g(T) ],
\]
where $\mbox{corr}$ means linear correlation, and the supremum is taken over real functions~$f$ and~$g$ such that the variances of $f(S)$ and $g(T)$ are finite.
Evidently,
\[
\gamma(S_1, T_1) \leq \gamma(S_1, (T_1,T_2)).
\]
We then have the following result.

\begin{theorem} \label{thm:speed}
Suppose that $\pi_{\bm{k}}(\cdot \mid \bm{y}_{(\bm{k})})$ is proper, and that the conditional distributions in the DA and DAI algorithms are well-defined.
Then, the $L^2$ convergence rate of the DA chain targeting  $\pi_{\bm{k}}(\cdot \mid \bm{y}_{(\bm{k})})$ is at least as small as that of any DAI chain targeting $\pi_{\bm{k}}(\cdot \mid \bm{y}_{(\bm{k})})$.
\end{theorem}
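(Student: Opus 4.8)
The plan is to reduce the comparison of the two concrete algorithms to the abstract maximal-correlation framework that the paper has already set up immediately before the theorem. Recall that with $S = S_1$ the chain on $(\bm{B},\bm{\Sigma})$, the DA algorithm corresponds to the augmentation $T = T_1 = \bm{W}$, while the DAI algorithm corresponds to the richer augmentation $T = (T_1,T_2) = (\bm{W},\bm{Y}_{(\bm{k}'-\bm{k})})$. The key fact, cited from \cite{liu1994covariance}, is that for a data augmentation (two-component Gibbs) chain the $L^2$ convergence rate equals the squared maximal correlation $\gamma(S,T)^2$ between the target variable and the augmenting variable. So it suffices to establish the monotonicity $\gamma(S_1,T_1) \le \gamma(S_1,(T_1,T_2))$, which is already displayed in the excerpt as an immediate consequence of the definition of maximal correlation: the supremum defining $\gamma(S_1,(T_1,T_2))$ is taken over a larger class of functions (all functions $g(T_1,T_2)$, which includes those depending on $T_1$ alone), so it can only be larger.

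First I would verify that the hypotheses of the theorem guarantee the framework applies, i.e., that both chains are genuinely well-defined reversible data augmentation chains with stationary distribution $\pi_{\bm{k}}(\cdot \mid \bm{y}_{(\bm{k})})$ and that the relevant maximal correlations are the correct $L^2$ rates. The assumption that $\pi_{\bm{k}}(\cdot \mid \bm{y}_{(\bm{k})})$ is proper ensures the target is a genuine probability distribution, and the assumption that the conditional distributions in both algorithms are well-defined ensures the I and P steps of each chain are valid; these are exactly the ingredients needed to identify each chain as the generic DA chain of the preceding subsection with the stated choices of $S$ and $T$. Second, I would invoke the identification, already carried out in the text, that taking $(S,T) = (S_1, T_1)$ yields the DA chain and $(S,T) = (S_1,(T_1,T_2))$ yields the DAI chain, so their $L^2$ convergence rates are $\gamma(S_1,T_1)^2$ and $\gamma(S_1,(T_1,T_2))^2$ respectively.

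Third, I would apply the inequality $\gamma(S_1,T_1) \le \gamma(S_1,(T_1,T_2))$. Squaring (both quantities lie in $[0,1]$, so squaring preserves the inequality) gives that the DA rate is at most the DAI rate, which is precisely the claim. The one point worth a sentence of justification is the monotonicity of maximal correlation under coarsening of the second argument: any admissible competitor $g(T_1)$ in $\gamma(S_1,T_1)$ is also an admissible competitor $g(T_1,T_2)$ in $\gamma(S_1,(T_1,T_2))$ with the same finite-variance constraint, so the latter supremum dominates.

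I do not expect any serious obstacle, since the hard analytic content — the identification of the $L^2$ rate with the squared maximal correlation — is imported wholesale from \cite{liu1994covariance}, and the monotonicity of maximal correlation is elementary. The only place requiring care is making sure the two chains are set up on the \emph{same} target with the \emph{same} $S$-marginal so that the maximal correlations are comparable; this is guaranteed because both augmentations $T_1$ and $(T_1,T_2)$ are defined from the single joint law of $(\bm{B},\bm{\Sigma},\bm{W},\bm{Y}_{(\bm{k}'-\bm{k})})$ conditioned on $\bm{Y}_{(\bm{k})} = \bm{y}_{(\bm{k})}$, whose $(\bm{B},\bm{\Sigma})$-marginal is $\pi_{\bm{k}}(\cdot \mid \bm{y}_{(\bm{k})})$ in both cases. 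Thus the proof is essentially a two-line citation-plus-monotonicity argument once the setup is in place.
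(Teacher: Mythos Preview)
Your proposal is correct and follows exactly the same approach as the paper: identify both chains as instances of the generic data augmentation scheme with $S=S_1$ and $T=T_1$ (DA) versus $T=(T_1,T_2)$ (DAI), invoke the \cite{liu1994covariance} identification of the $L^2$ rate with $\gamma(S,T)^2$, and conclude via the elementary monotonicity $\gamma(S_1,T_1)\le\gamma(S_1,(T_1,T_2))$. There is nothing to add.
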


Recall that a smaller convergence rate means faster convergence.
Thus, when computation time is not considered and the observed missing structure is monotone, the DA algorithm is faster than the DAI algorithm. 
In this case, imputation of missing data, if needed, should be performed in a post hoc rather than intermediate manner.
In Section~\ref{sec:num} we use numerical experiments to show that this appears to be the case even after computation cost is taken into account.

\section{Numerical Experiment}\label{sec:num}

We compare the performance of the DA and DAI algorithms using simulated data.
All simulations are implemented through the \texttt{Bayesianrobust} R package.

Suppose that we have $n = 50$ observations in a study.
Assume that for each observation, the response has $d = 2$ components, while the predictor has the form $\bm{x}_i = (1,x_i)^\top$ where $x_i \in \mathbb{R}$.
We generate the $x_i$s using independent normal distributions.
The response matrix~$\bm{y}$ is generated according to the robust linear regression model, with the mixing distribution being $\mbox{Gamma}(1,1)$. 
The simulated data set is fixed throughout the section.

On the modeling side, we consider an independence Jeffreys prior with $m=d =2$ and $\bm{a}=\bm{0}$ (see~Equation \eqref{prior}) and three mixing distributions:

\begin{itemize}
\item G: The mixing distribution is Gamma$(2,2)$. The error is $t$ distribution with degrees of freedom 4.  
By Equation \eqref{eq:wconditional}, in the I step of the DA algorithm, one draws~$n$ independent Gamma random variables.
\item GIG: The mixing distribution is GIG$(1,1,-0.5)$. The error is generalised hyperbolic. 
By Equation  \eqref{eq:wconditional}, in the I step of the DA algorithm, one draws~$n$ independent generalized inverse Gaussian random variables.
\item  P: The mixing distribution is the point mass at~1.
The error is multivariate normal distribution. 
In this case, the posterior can be exactly sampled by the DA algorithm. 
\end{itemize}

We study three realized missing structures. 
Under these missing structures, the response matrix $\bm{y}$ has, respectively, 45, 40, and 35 rows fully observed.
The other rows all have only the second entry observed.
It is clear that all three missing structures are monotone.

In total, we consider nine combinations of mixing distributions and missing structures.
We apply both the DA and DAI algorithms in each scenario.
In the I2 step of the DAI algorithm (see Section~\ref{ssec:dai}), $\bm{k}'$ is taken to be the $n \times d$ matrix whose entries are all~1, i.e., $\bm{k}'$ corresponds to the case that the response matrix is fully observed.
At the end of each iteration of the DA algorithm, a post hoc imputation step is performed, so both algorithms impute all missing response entries.

Consider estimating the regression coefficients $\bm{B}$ and scatter matrix $\bm{\Sigma}$ using the posterior mean computed via the DA or DAI algorithm.
We compare the efficiency of the two algorithms based on the effective sample size (ESS) of each component separately and all components jointly.
At a given simulation length~$N$, the ESS of an MCMC estimator is defined to be~$N$ times the posterior variance divided by the asymptotic variance of the estimator \citep{vats2019multivariate}.
To account for computation cost, we also consider the ESS per minute, $\mbox{ESSpm} = \mbox{ESS}/t_N$, where $t_N$ is the number of minutes needed to run~$N$ iterations of the algorithm.
The simulation length is set to be $N = 30,000$ without burn-in, and the initial values are the ordinary least squares estimates using the observations that belong to pattern $1$, i.e., the observations without missing elements. 

\begin{figure}[!htb]\centering
\includegraphics[width=1\linewidth]{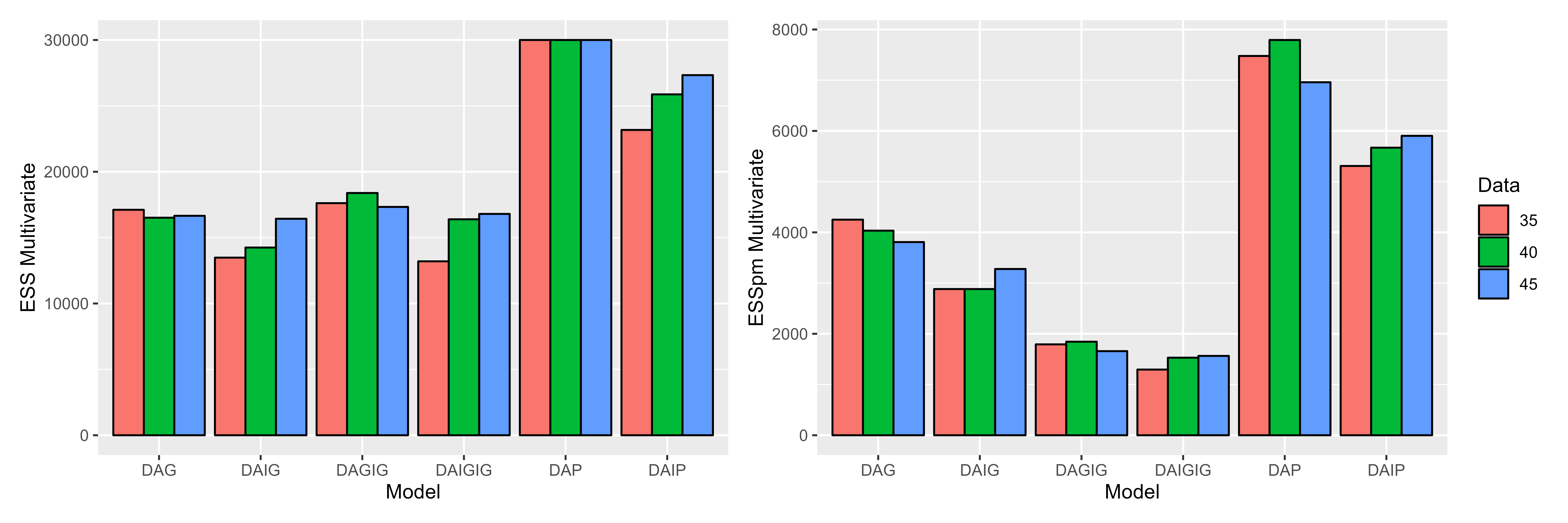}
\caption{The ESS and ESSpm of all components jointly in different scenarios. The $x$ axis lists the mixing distributions and samplers used, and the legend labels the missing structures by the number of fully observed rows.}\label{fig:exp2}
\end{figure}


Figure~\ref{fig:exp2} gives the ESS and ESSpm of all components jointly for each of the $3 \times 3 \times 2$ combinations of mixing distributions, missing structures, and MCMC algorithms. The ESS and ESSpm of each component separately are included in Table  \ref{app:exp2ess} and \ref{app:exp2esspm} in Appendix \ref{app:ess}.
We see that the DA algorithm gives a larger ESS compared to the DAI algorithm.
For the DAI algorithm, the ESS is lower when there are more missing data.
Similar trends appear when we consider the ESSpm.
In short, imputation of  missing responses values slows an algorithm down.
This is consistent with our theoretical results.
(Strictly speaking, our theoretical results concern convergence rate, not effective sample size, but for data augmentation chains which are reversible and positive, these two concepts are closely related.
See, e.g., \cite{rosenthal2003asymptotic}, Section 3.)

In addition to the experiment above, we consider another series of mixing distributions, namely $\mbox{Gamma}(v,v)$ with $v = 2,8,25$.
The resultant error distributions are multivariate $t$ distributions with $2v$ degrees of freedom.
Applying the DA and DAI algorithms to these models, we obtain Figure~\ref{fig:exp3}, the ESS and ESSpm of all components jointly. The ESS and ESSpm of each component separately are in Table \ref{app:exp3ess} and \ref{app:exp3esspm} in Appendix \ref{app:ess}.
Our simulation shows that when the model assumes that the error distribution has a lighter tail, the MCMC algorithms tend to be more efficient.

\begin{figure}[!htb]\centering
\includegraphics[width=1\linewidth]{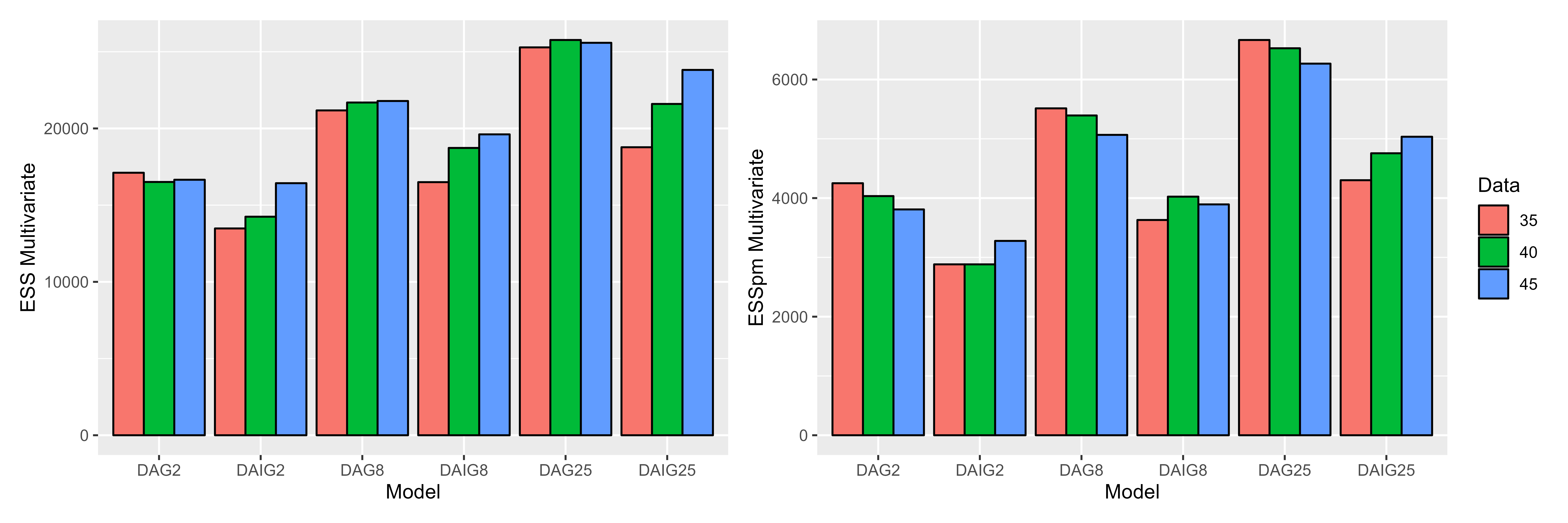}
\caption{The ESS and ESSpm of all components jointly in different scenarios. The $x$ axis lists the Gamma$(v,v)$ mixing distributions and samplers used, and the legend labels the missing structures by the number of fully observed rows.}\label{fig:exp3}
\end{figure}
%
%
%

\section{Conclusion}\label{sec:con}

We conducted convergence analysis for a data augmentation algorithms used in Bayesian robust multivariate linear regression with incomplete data. 
The algorithm was first proposed by \cite{liu1996bayesian}.
But, previously, little was known about its theoretical properties when the response matrix contains missing values.
We consider two versions of the algorithm, DA and DAI.
The DA algorithm can only be applied when the missing structure is monotone, whereas the DAI algorithm can be implemented for an arbitrary missing structure.
We establish geometric ergodicity of the DA algorithm under simple conditions.
For the DAI algorithm, we give conditions for Harris ergodicity. 
We compare the $L^2$ convergence rates of the DA and DAI algorithms.  
The $L^2$ convergence rate of the DA algorithm is at least as small as a corresponding DAI algorithm. 
A numerical study is provided. 
Under monotone missing structures, the DA algorithm outperforms the DAI computationally and theoretically.

\section*{Acknowledgments}

The first and second authors were supported by the National Science Foundation [grant number DMS-2112887], and the third author was supported by the National Science Foundation [grant number DMS-2152746].

\vspace{1cm}
\noindent{\bf\LARGE Appendix}

\appendix

\section{Details of the DA algorithm} \label{app:dadetails}

The DA algorithm consists of two steps, the I step and the P step. 
The I step is shown in Section~\ref{ssec:da}. 
In this section, we describe the P step when the missing structure $\bm{k}$ is monotone and Condition \eqref{da-condition} holds.

Recall from Section \ref{ssec:missing} that the monotone missing structure $\bm{k}$ has $d$ possible patterns. For $\ell \in \{1,\dots,d\}$, $n_{\ell(\bm{k})}$ is the number of observations that belong to pattern $\ell$. $\bm{x}_{(\bm{k},\ell)}$ and $\bm{y}_{(\bm{k},\ell)}$ are submatrices of $\bm{x}$ and $\bm{y}$ respectively defined in Section \ref{ssec:missing}. 
Define an $n \times n$ diagonal matrix $\bm{\lambda}_{\ell} = \mbox{diag}(w_1,\dots, w_{\sum_{j=1}^\ell n_{j(\bm{k})}}, 0, \dots, 0)$.
That is, the first $\sum_{j=1}^\ell n_{j(\bm{k})}$ diagonal entries of $\bm{\lambda}_{\ell}$ are  $w_1, \dots, w_{\sum_{j=1}^\ell n_{j(\bm{k})}}$, listed in order, and all the other entries are 0. 
Similarly, let  $\bm{\lambda}'_{\ell} = \mbox{diag}(w_1,\dots, w_{\sum_{j=1}^\ell n_j(\bm{k})})$. Denote $\hat{\betareal}_\ell$ as the weighted least squares estimates of the regression coefficient of $\bm{y}_{(\bm{k},\ell)}$ on  $\bm{x}_{(\bm{k},\ell)}$ with weight $(w_1,\dots, w_{\sum_{j=1}^\ell n_{j(\bm{k})}})$, and $\bm{s}_\ell$ as the corresponding weighted total sum of residual squares and cross-products matrix, i.e., 
\[
\hat{\betareal}_\ell = \left(\bm{x}^\top \bm{\lambda}_\ell \bm{x} \right)^{-1} \bm{x}_{(\bm{k},\ell)}^\top \bm{\lambda}'_\ell \bm{y}_{(\bm{k}, \ell)}, \quad \bm{s}_\ell = \left( \bm{y}_{(\bm{k},\ell)} - \bm{x}_{(\bm{k},\ell)} \hat{\betareal}_\ell \right)^\top \bm{\lambda}'_\ell \left( \bm{y}_{(\bm{k},\ell)} - \bm{x}_{(\bm{k},\ell)} \hat{\betareal}_\ell \right).
\] 
Let $\bm{a}_\ell$ be the lower right $(d-\ell+1)\times(d-\ell+1)$ submatrix of the positive semi-definite matrix $\bm{a}$ given in the prior (Equation \eqref{prior}). 

\textbf{P step.} Draw $(\bm{B}(t+1), \bm{\Sigma}(t+1))$ from the conditional distribution of $(\bm{B}, \bm{\Sigma})$ given $(\bm{W}, \bm{Y}_{(\bm{k})}) = (\bm{w}, \bm{y}_{(\bm{k})})$ using the following procedure.

\begin{enumerate}
	\item Draw $\bm{\Sigma}(t+1)$ given  $(\bm{W}, \bm{Y}_{(\bm{k})}) = (\bm{w}, \bm{y}_{(\bm{k})})$:

	For $\ell \in \{1, \dots, d\}$, let $\bm{c}_\ell = \bm{a}_\ell+ \bm{s}_\ell$, and let $\bm{e}_\ell$ be the lower triangular Cholesky factor of $\bm{c}_\ell^{-1}$ (so that $\bm{c}_\ell^{-1} = \bm{e}_\ell \bm{e}_\ell^\top$). 
	Draw a sequence of random vectors $\bm{F}_1,\dots, \bm{F}_d$ such that $\bm{F}_\ell = (F_{\ell,\ell},\dots,F_{d,\ell})^\top$ is $(d-\ell+1)$ dimensional,
	and that		
	\begin{enumerate}			
		\item $F_{i j} \sim N(0,1)$ for $1 \leq j < i\leq d$,		
		\item $F_{i i}^2 \sim \chi^2 (df_i)$ for $ i \in \{1,\dots, d\}$, where $df_i = (\sum_{j=1}^i n_j) -i+m-p-d+1$,		
		\item $F_{i j}$ are independent $\text{for all } 1 \leq j \leq i \leq d$.
	\end{enumerate}
	Denote the sampled values by $\bm{f}_1, \dots, \bm{f}_d$.
	
	
	Let $\bm{h}_\ell= \bm{e}_\ell\bm{f}_\ell$ for $\ell \in \{1, \dots, d\}$,   and let $\bm{h}$
	be a $d\times d$ lower triangular matrix with its lower triangular non-zero part formed by columns $\bm{h}_1,\dots,\bm{h}_d$. 
	Then $\sigmareal = (\bm{hh}^\top)^{-1}$ serves as a sampled value of $\bm{\Sigma}(t+1)$.	
	

	\item 
	
	Draw $\bm{B}(t+1)$ given $(\bm{\Sigma}(t+1),\bm{W},\bm{Y}_{(\bm{k})}) = (\bm{\sigmareal},\bm{w},\bm{y}_{(\bm{k})})$:
	
	%
	%
	
	Independently, draw $p$-dimensional standard normal random vectors $\bm{Z}_1, \dots, \bm{Z}_d$, and call the sampled values $\bm{z}_1, \dots, \bm{z}_d$.
	For $\ell \in \{1,\dots,d\}$, let $\bm{u}_\ell$ be the lower triangular Cholesky factor of $(\bm{x}^\top\bm{\lambda}_\ell \bm{x})^{-1}$.
	Then 
	\[
	(\bm{u}_1 \bm{z}_1,\dots, \bm{u}_d \bm{z}_d)\bm{h}^{-1} + (\hat{\betareal}_1 \bm{h}_{1}, \dots, \hat{\betareal}_d \bm{h}_{d}) \bm{h}^{-1}
	\]
	serves as a sampled value of $\bm{B}(t+1)$.

\end{enumerate}

Let us quickly consider a special case.
Recall that $\bm{k}_0 \in \{0,1\}^{n \times d}$ is the missing structure that corresponds to a completely observable response, i.e., all elements of~$\bm{k}_0$ are~1.
Then $\bm{Y}_{(\bm{k}_0)} = \bm{Y}$, $\bm{y}_{(\bm{k}_0)} = \bm{y}$, and~$\bm{k}_0$ is monotone.
Let $\bm{y}:\bm{x}$ be the matrix obtained by attaching~$\bm{x}$ to the right of~$\bm{y}$.
Then Condition \eqref{da-condition} for $(\bm{k}_0,\bm{y}_{(\bm{k}_0)})$ is equivalent to
\begin{equation} \label{da-condition-0} \tag{H1.0}
	r(\bm{y}: \bm{x}) = p + d, \quad n > p + 2d - m - 1. 
\end{equation}
Note that if there is some monotone~$\bm{k}$ such that Condition \eqref{da-condition} holds for $(\bm{k},\bm{y}_{(\bm{k})})$, then Condition \eqref{da-condition-0} necessarily holds.
Under~Condition \eqref{da-condition-0}, the conditional distribution of $(\bm{B},\bm{\Sigma})$ given $(\bm{W}, \bm{Y}) = (\bm{w}, \bm{y})$ is proper and rather simple.
We say $\bm{Z} \sim \mbox{N}_{p,d}(\bm{\mu},\bm{u},\bm{v})$ for $\bm{\mu} \in \mathbb{R}^{p \times d}$, $\bm{u} \in S_+^{p \times p}$, and $\bm{v} \in S_+^{d \times d}$ if $\bm{Z}$ is a $p \times d$ random matrix associated with the probability density function
\[
\frac {\exp \left[-{\frac {1}{2}}\,\mathrm {tr} \left\{\bm{v} ^{-1}(\bm{z} -\bm{\mu} )^{T} \bm{u} ^{-1}(\bm{z} -\bm{\mu} )\right\}\right]}{(2\pi )^{pd/2}|\bm{v} |^{p/2}|\bm{u} |^{d/2}}, \quad \bm{z} \in \mathbb{R}^{p \times d}.
\]
$\mbox{N}_{p,d}$ is called a matrix normal distribution.
We say $\bm{Z} \sim \mbox{IW}_d(\nu, \bm{\psi})$ for $\nu > d-1$ and $\bm{\psi} \in S_+^{d \times d}$ if $\bm{Z}$ is a $d \times d$ random matrix associated with the probability density function
\[
\frac{|\bm{\psi}|^{\nu/2}}{2^{\nu d/2} \Gamma_d(\nu/2)} |\bm{z}|^{-(\nu + d + 1)/2} \exp \left\{ - \frac{1}{2} \mbox{tr}\left( \bm{\psi} \bm{z}^{-1} \right) \right\}, \quad \bm{z} \in S_+^{d \times d},
\]
where $\mbox{tr}(\cdot)$ returns the trace of a matrix, and $\Gamma_d(\cdot)$ is a multivariate gamma function.
$\mbox{IW}_d$ is called an inverse Wishart distribution, and it is well-known that a random matrix follows the $\mbox{IW}_d(\nu, \bm{\psi})$ distribution if and only if its inverse follows the Wishart distribution $\mbox{W}_d(\nu, \bm{\psi}^{-1})$, which has density
\[
\frac{|\bm{\psi}|^{\nu/2}}{2^{\nu d/2} \Gamma_d(\nu/2) } |\bm{z}|^{(\nu-d-1)/2} \exp \left\{ -\frac{1}{2} \mbox{tr} \left( \bm{\psi} \bm{z} \right) \right\}, \quad \bm{z} \in S_+^{d \times d}.
\]
For $\bm{w} = (w_1,\dots,w_n) \in (0,\infty)^n$, let $\bm{\lambda} = \mbox{diag}(w_1,\dots,w_n)$, i.e., the diagonal matrix whose diagonal elements are $w_1,\dots,w_n$, listed in order.
Let
\[
\hat{\betareal} = \left(\bm{x}^\top \bm{\lambda} \bm{x} \right)^{-1} \bm{x}^\top \bm{\lambda} \bm{y}, \quad \bm{s} = \left( \bm{y} - \bm{x} \hat{\betareal} \right)^\top \bm{\lambda} \left( \bm{y} - \bm{x} \hat{\betareal} \right),
\] 
and $\bm{\xi} =  (\bm{x}^\top \bm{\lambda} \bm{x})^{-1}$.
Then it is easy to see from~Equation \eqref{eq:fullmodel} that
\[
\begin{aligned}
	\bm{\Sigma} \mid (\bm{W}, \bm{Y}) = (\bm{w}, \bm{y}) &\sim \mbox{IW}_d \left( n - p + m - d, \bm{s} + \bm{a} \right), \\
	\bm{B} \mid (\bm{\Sigma}, \bm{W}, \bm{Y}) = (\sigmareal, \bm{w}, \bm{y})  &\sim \mbox{N}_{p,d} \left( \hat{\betareal}, \bm{\xi}, \sigmareal \right).
\end{aligned}
\]

\section{Proof of Theorem~\ref{thm:da}}\label{app:dageo}

We prove posterior propriety and geometric ergodicity by establishing drift and minorization (d\&m) conditions, which we now describe. 
Let $(\mathcal{X}, \mathcal{F})$ be a measurable space.
Consider a Markov chain $(X(t))_{t=0}^{\infty}$ whose state space is $(\mathcal{X}, \mathcal{F})$, and let $K: \mathcal{X} \times \mathcal{F} \to [0,1]$ be its transition kernel.
We say that $(X(t))$ satisfies a d\&m condition with drift function~$V: \mathcal{X} \to [0,\infty)$, minorization measure $\nu: \mathcal{F} \to [0,1]$, and parameters $(\eta,\varrho,\delta,\epsilon) \in \mathbb{R}^4$ if each of the following two conditions holds:
\begin{description}
	\item[Drift condition:] 
	For $x \in \mathcal{X}$,
	\[
	KV(x) := \int_{\mathcal{X}} V(x') K(x,\df x') \leq \eta V(x) + \varrho,
	\]
	where $\eta < 1$ and $\varrho<\infty$. Note that $KV(x)$ can be interpreted as the conditional expectation of $V(X(t+1))$ given $X(t) = x$, where~$t$ can be any non-negative integer.
	
	\item[Minorization condition:]
	Let $\nu$ be a probability measure, $\epsilon > 0$, and $\delta > 2\varrho/(1-\eta)$.
	Moreover, whenever $V(x) < \delta$, 
	\[
	K(x,A) \geq \epsilon \nu(A)\; \text{ for each }A \in \mathcal{F}.
	\]
\end{description}
It is well-known that if a d\&m condition holds, then the Markov chain has a proper stationary distribution, and it is geometrically ergodic \citep{rosenthal1995minorization}.
See also \cite{jones2001honest},  \cite{roberts2004general}, and \cite{meyn2005markov}.


We begin by establishing a minorization condition for the DA algorithm associated with a realized response $\bm{y}$ and missing structure~$\bm{k}$.
As before, $d_i$ will be used to denote the number of nonzero elements in the $i$th row of~$\bm{k}$.
Let $( \bm{B}(t), \bm{\Sigma}(t))_{t=0}^{\infty}$ by the underlying Markov chain, and denote its Markov transition kernel by~$K$.
Set $\mathcal{X} = \mathbb{R}^{p \times d} \times S_+^{d \times d}$, and let~$\mathcal{F}$ be the usual Borel algebra associated with~$\mathcal{X}$.
Define a drift function
\[
V(\betareal,\sigmareal) = \sum_{ i = 1 }^n r_{i,(\bm{k})},
\]
where, for $i \in \{1,\dots,n\}$,
\[
r_{i,(\bm{k})} = \left(\bm{y}_{i, (\bm{k})} - \bm{c}_{i,(\bm{k})} \betareal^\top \bm{x}_i \right)^\top \left( \bm{c}_{i,(\bm{k})} \sigmareal \bm{c}_{i,(\bm{k})}^\top \right)^{-1}  \left(\bm{y}_{i, (\bm{k})} - \bm{c}_{i,(\bm{k})} \betareal^\top \bm{x}_i \right).
\]
Then the following holds.
\begin{lemma} \label{lem:minor}
	For any $\delta > 0$, there exist a probability measure $\nu: \mathcal{F} \to [0,1]$ and $\epsilon > 0$ such that whenever $V(\betareal,\sigmareal) < \delta$,
	\[
	K((\betareal,\sigmareal),A) > \epsilon \nu(A)\; \text{ for each }A \in \mathcal{F}.
	\]
\end{lemma}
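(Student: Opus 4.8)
The plan is to establish the minorization condition by exhibiting a uniform lower bound on the transition kernel $K$ over the small set $\{(\betareal,\sigmareal): V(\betareal,\sigmareal) < \delta\}$. Since the DA chain is reversible with respect to $\pi_{\bm{k}}(\cdot\mid\bm{y}_{(\bm{k})})$ and its transition kernel is the composition of the I step (drawing $\bm{w}$ from $P_{\bm{k}}(\cdot\mid\betareal,\sigmareal,\bm{y}_{(\bm{k})})$) and the P step (drawing $(\bm{B},\bm{\Sigma})$ from its conditional given $(\bm{w},\bm{y}_{(\bm{k})})$), I can write the kernel explicitly as
\[
K((\betareal,\sigmareal),A) = \int_{(0,\infty)^n} P_{\bm{k}}(\df\bm{w}\mid\betareal,\sigmareal,\bm{y}_{(\bm{k})})\, Q(\bm{w},A),
\]
where $Q(\bm{w},\cdot)$ denotes the P-step conditional distribution of $(\bm{B},\bm{\Sigma})$ given $(\bm{W},\bm{Y}_{(\bm{k})})=(\bm{w},\bm{y}_{(\bm{k})})$, which does not depend on the current state $(\betareal,\sigmareal)$. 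The key observation is that all of the dependence of $K$ on the current state is funneled through the weight distribution in the I step.

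First I would analyze the I-step density from Equation \eqref{eq:wconditional}, namely $P_{\bm{k}}(\df\bm{w}\mid\betareal,\sigmareal,\bm{y}_{(\bm{k})})\propto\prod_{i=1}^n w_i^{d_i/2}\exp(-r_{i,(\bm{k})}w_i/2)\,\mix(\df w_i)$, and seek a uniform lower bound on its (unnormalized) density valid on the small set. On $\{V<\delta\}$ we have $\sum_i r_{i,(\bm{k})}<\delta$, hence each $r_{i,(\bm{k})}\in[0,\delta)$, so the exponential factor $\exp(-r_{i,(\bm{k})}w_i/2)\geq\exp(-\delta w_i/2)$ uniformly. The normalizing constant must also be controlled from above uniformly over the small set; since $r_{i,(\bm{k})}\geq 0$ the normalizer is bounded above by $\prod_i\int_0^\infty w^{d_i/2}\mix(\df w)$, which is finite by Condition \eqref{hh}. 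Combining these bounds yields a constant $\epsilon>0$ and a fixed sub-probability measure $\tilde\mu(\df\bm{w})\propto\prod_i w_i^{d_i/2}\exp(-\delta w_i/2)\,\mix(\df w_i)$ on $(0,\infty)^n$ such that $P_{\bm{k}}(\df\bm{w}\mid\betareal,\sigmareal,\bm{y}_{(\bm{k})})\geq\epsilon\,\tilde\mu(\df\bm{w})$ for all $(\betareal,\sigmareal)$ with $V(\betareal,\sigmareal)<\delta$.

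With this in hand I would define the minorizing measure by pushing $\tilde\mu$ through the P step: set $\nu(A)=[\tilde\mu((0,\infty)^n)]^{-1}\int\tilde\mu(\df\bm{w})\,Q(\bm{w},A)$, which is a genuine probability measure on $\mathcal{F}$ because $Q(\bm{w},\cdot)$ is proper by the hypothesis that the $(\bm{B},\bm{\Sigma})$ conditional given $(\bm{w},\bm{y}_{(\bm{k})})$ is proper for every $\bm{w}\in(0,\infty)^n$. Integrating the I-step lower bound against $Q$ then gives $K((\betareal,\sigmareal),A)\geq\epsilon'\,\nu(A)$ on the small set, with $\epsilon'=\epsilon\,\tilde\mu((0,\infty)^n)>0$, establishing the lemma.

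I expect the main obstacle to be ensuring that the I-step normalizing constant is bounded above \emph{uniformly} in $(\betareal,\sigmareal)$ on the small set, rather than merely finite for each fixed state. This is where Condition \eqref{hh} and the nonnegativity of $r_{i,(\bm{k})}$ are essential: they give a state-independent dominating quantity $\prod_i\int_0^\infty w^{d_i/2}\mix(\df w)$. A secondary technical point is verifying that $\tilde\mu$ has positive total mass and that $\nu$ is well-defined as a probability measure, both of which follow from properness of $Q(\bm{w},\cdot)$ together with Condition \eqref{hh}. Notably, this minorization argument requires neither any structure on $\mix$ near the origin nor monotonicity of $\bm{k}$; those finer hypotheses from Theorem \ref{thm:da} will instead be needed for the drift condition.
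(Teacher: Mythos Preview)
Your proposal is correct and follows essentially the same route as the paper: decompose $K$ as the I-step followed by the P-step, bound the I-step density below on $\{V<\delta\}$ by replacing each $r_{i,(\bm{k})}$ with~$\delta$ in the numerator and with~$0$ in the normalizing constant (using Condition~\eqref{hh}), and then push the resulting fixed measure through the P-step kernel to obtain~$\nu$. Your explicit remarks about the role of Condition~\eqref{hh} in controlling the normalizer and about the properness of $Q(\bm{w},\cdot)$ make the argument slightly more careful than the paper's own presentation, but the underlying idea is identical.
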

\begin{proof}
	One can write
	\[
	K((\betareal,\sigmareal),A) = \int_{(0,\infty)^n} Q_{{\bm{k}}}( A \mid \bm{w}, \bm{y}_{(\bm{k})} ) \, P_{{\bm{k}}}(\df \bm{w} \mid \betareal, \sigmareal, \bm{y}_{(\bm{k})}) ,
	\]
	where $P_{{\bm{k}}}(\cdot \mid \betareal, \sigmareal, \bm{y}_{(\bm{k})})$ is the conditional distribution of $\bm{W}$ given $(\bm{B}, \bm{\Sigma}, \bm{Y}_{(\bm{k})}) = (\betareal, \sigmareal, \bm{y}_{(\bm{k})})$, and $Q_{{\bm{k}}}(\cdot \mid \bm{w}, \bm{y}_{(\bm{k})} )$ is the conditional distribution of $(\bm{\beta},\bm{\Sigma})$ given $(\bm{W}, \bm{Y}_{(\bm{k})}) = (\bm{w}, \bm{y}_{(\bm{k})})$, both derived from~Equation \eqref{eq:fullmodel}.
	As stated in Section~\ref{ssec:da},
	\[
	\begin{aligned}
		P_{{\bm{k}}}(\df \bm{w} \mid \betareal,\sigmareal, \bm{y}_{(\bm{k})}) = \prod_{i=1}^n  \frac{ w_i^{d_i/2} \exp \left( - r_{i,(\bm{k})} w_i/2 \right) \, \mix( \df w_i)}{\int_0^{\infty} w^{d_i/2} \exp \left( - r_{i,(\bm{k})} w / 2 \right) \, \mix( \df w)  }, &\\
		\quad \bm{w} = (w_1,\dots,w_n)^\top \in (0,\infty)^n.&
	\end{aligned}
	\]
	Assume that $V(\betareal,\sigmareal) < \delta$ for some $\delta > 0$.
	Then $r_{i,(\bm{k})} < \delta$ for each~$i$.
	It follows that, for any measurable $A' \in (0,\infty)^n$,
	\[
	P_{{\bm{k}}}(A' \mid \betareal,\sigmareal, \bm{y}_{(\bm{k})}) \geq \epsilon \int_{A'} \prod_{i=1}^n \frac{ w_i^{d_i/2} \exp \left( - \delta w_i/2 \right) \, \mix( \df w_i)}{\int_0^{\infty} w^{d_i/2} \exp \left( - \delta w / 2 \right) \, \mix( \df w)  },
	\]
	where
	\[
	\epsilon = \left\{ \frac{\int_0^{\infty} w^{d_i/2} \exp \left( - \delta w / 2 \right) \, \mix( \df w)}{\int_0^{\infty} w^{d_i/2} \, \mix( \df w)} \right\}^n.
	\]
	Thus, for each $A \in \mathcal{F}$,
	\[
	K((\betareal,\sigmareal),A) \geq \epsilon \nu(A),
	\]
	where $\nu(\cdot)$ is a probability measure given by
	\[
	\nu(A) = \int_{(0,\infty)^n} Q_{{\bm{k}}}(A \mid \bm{w}, \bm{y}_{(\bm{k})}) \, \prod_{i=1}^n \frac{ w_i^{d_i/2} \exp \left( - \delta w_i/2 \right) \, \mix( \df w_i)}{\int_0^{\infty} w^{d_i/2} \exp \left( - \delta w / 2 \right) \, \mix( \df w)  }.
	\]
\end{proof}

To establish d\&m, it remains to show the following.
\begin{lemma}
	Suppose that Condition \eqref{hh} holds, and that the conditional distribution of $(\bm{B},\bm{\Sigma})$ given $(\bm{W}, \bm{Y}_{(\bm{k})}) = (\bm{w}, \bm{y}_{(\bm{k})})$ is proper for every $\bm{w} = (w_1,\dots,w_n)^\top \in (0,\infty)^n$.
	If any one of the following conditions holds, then there exist $\eta < 1$ and $\varrho < \infty$ such that, for $(\betareal,\sigmareal) \in \mathcal{X}$,
	\[
	KV(\betareal,\sigmareal) < \eta V(\betareal,\sigmareal) + \varrho.
	\]
	\begin{enumerate}
		\item $\mix(\cdot)$ is zero near the origin;
		\item $\mix(\cdot)$ is faster than polynomial near the origin; or
		\item $\mix(\cdot)$ is polynomial near the origin with power $c> c_1$, where
		\[
		c_1=\frac{n-p+m-\min\{d_1,\dots,d_n\}}{2}.
		\]
	\end{enumerate}
\end{lemma}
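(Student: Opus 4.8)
The plan is to verify the drift inequality by conditioning on the latent weights sampled in the I step and invoking the tower property. Writing the transition kernel as the composition of the I and P steps, I would express
\[
KV(\betareal,\sigmareal) = \int_{(0,\infty)^n} h(\bm{w}) \, P_{\bm{k}}(\df\bm{w}\mid\betareal,\sigmareal,\bm{y}_{(\bm{k})}), \qquad h(\bm{w}) := \int_{\mathcal{X}} V(\betareal',\sigmareal') \, Q_{\bm{k}}(\df(\betareal',\sigmareal')\mid\bm{w},\bm{y}_{(\bm{k})}),
\]
so that $h(\bm{w})$ is the expected value of the drift function after a single P step given the weights, and the outer integral is against the tilted mixing law from~\eqref{eq:wconditional}. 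The whole argument then splits into (a) an exact inner computation that bounds $h(\bm{w})$ by a multiple of $\sum_i w_i^{-1}$ plus a constant, and (b) an asymptotic analysis of the univariate negative moments $\int w^{-1}$ against the tilted mixing measure, which is where the three conditions on $\mix(\cdot)$ enter.

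For step (a), I would use the explicit form of the P step. In the fully observed template (and pattern-by-pattern in the monotone case via the Cholesky/chi-square construction of Appendix~\ref{app:dadetails}), $\bm{\Sigma}'$ is inverse-Wishart and $\bm{B}'\mid\bm{\Sigma}'$ is matrix normal centered at the weighted least squares estimate $\hat\betareal$. Decomposing $\bm{y}_i-\betareal'^\top\bm{x}_i$ into the weighted-LS residual $\bm{u}_i = \bm{y}_{i,(\bm{k})}-\bm{c}_{i,(\bm{k})}\hat\betareal^\top\bm{x}_i$ plus a mean-zero sampling fluctuation, integrating $\bm{B}'$ out with the matrix-normal trace identity and then $\bm{\Sigma}'^{-1}$ with the Wishart mean $\nu(\bm{s}+\bm{a})^{-1}$, I expect to obtain an exact expression of the form $h(\bm{w}) = \sum_i [\nu\,\bm{u}_i^\top(\bm{s}+\bm{a})^{-1}\bm{u}_i + (\text{matrix-normal term})]$. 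The key estimate is the positive-semidefinite inequality: since $\bm{s}+\bm{a}\succeq\bm{s}\succeq w_i\bm{u}_i\bm{u}_i^\top$, one has $w_i\,\bm{u}_i^\top(\bm{s}+\bm{a})^{-1}\bm{u}_i\le 1$, whence $h(\bm{w})\le C_1\sum_i w_i^{-1} + C_2$ for constants $C_1,C_2$ not depending on the current state. The constant $C_1$ is governed by the inverse-Wishart degrees of freedom, which in the monotone case are pattern-dependent; tracking this carefully across patterns is what ultimately produces the numerology behind the threshold.

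For step (b), conditional independence of the $W_i$ reduces the outer expectation to the univariate ratios
\[
\mathbb{E}[W_i^{-1}\mid\betareal,\sigmareal] = \frac{\int_0^{\infty} w^{d_i/2-1}\exp(-r_{i,(\bm{k})}w/2)\,\mix(\df w)}{\int_0^{\infty} w^{d_i/2}\exp(-r_{i,(\bm{k})}w/2)\,\mix(\df w)},
\]
and I would analyze these as $r_{i,(\bm{k})}\to\infty$, the regime in which the exponential tilt concentrates mass near the origin. When $\mix(\cdot)$ is zero near the origin the support is bounded away from $0$, so the ratio is bounded and $KV$ is bounded, giving the drift with any $\eta\in(0,1)$. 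When $\mix(\cdot)$ is faster than polynomial near the origin, the thin mass at $0$ forces $\mathbb{E}[W_i^{-1}]=o(r_{i,(\bm{k})})$, so the drift coefficient can be taken arbitrarily small. When $\mix(\cdot)$ is polynomial with power $c$, a Laplace-type asymptotic gives $\mathbb{E}[W_i^{-1}]\sim r_{i,(\bm{k})}/(d_i+2c)$, yielding an asymptotic contribution $C_1/(d_i+2c)$ to the drift coefficient. Combining the asymptotic rate (strictly below $1$ at every $i$, worst at $\min_i d_i$) with continuity and boundedness of the ratio on compact $r$-ranges produces a global affine bound $C_1\mathbb{E}[W_i^{-1}]\le \eta r_{i,(\bm{k})}+\text{const}$ with $\eta<1$; summing over $i$ gives $KV\le\eta V+\varrho$.

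I expect the main obstacle to be the polynomial case: getting geometric ergodicity hinges on the drift coefficient being \emph{strictly} below $1$, so I must pin down the exact Laplace constant in $\mathbb{E}[W_i^{-1}]\sim r_{i,(\bm{k})}/(d_i+2c)$ and match it against the precise value of $C_1$ from the P-step moment computation so that the condition $C_1/(d_i+2c)<1$ collapses exactly to $c>c_1=(n-p+m-\min_i d_i)/2$. Tracking $C_1$ through the pattern-wise monotone construction, where the inverse-Wishart degrees of freedom vary by pattern and the matrix-normal term also grows as weights shrink, is the delicate bookkeeping; this is precisely the place where the analysis should reduce to, and generalize, the fully observed computation of \cite{hobert2018convergence}.
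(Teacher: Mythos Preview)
Your overall architecture is correct and matches the paper: split $KV$ into the P-step inner expectation $h(\bm{w})$ and the I-step outer expectation, bound $h(\bm{w})$ by a constant times $\sum_i w_i^{-1}$ using the quadratic-form inequality $w_i\,\bm{u}_i^\top(\bm{s}+\bm{a})^{-1}\bm{u}_i\le 1$, and then control the tilted negative moment $\mathbb{E}[W_i^{-1}]$ via the three mixing-distribution cases. Your Laplace asymptotic $\mathbb{E}[W_i^{-1}]\sim r_{i,(\bm{k})}/(d_i+2c)$ is exactly what the paper gets, and your reading of the threshold $c>c_1$ is correct.

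The one substantive difference is in how step~(a) is carried out. You propose to work ``pattern-by-pattern in the monotone case via the Cholesky/chi-square construction,'' tracking the degrees of freedom through that decomposition. The paper sidesteps this entirely with a clean tower-property trick: it inserts the \emph{unobserved} response entries $\bm{Y}_{(\bm{k}_0-\bm{k})}$ as an extra layer and writes
\[
Q_{\bm{k}}(\cdot\mid\bm{w},\bm{y}_{(\bm{k})}) = \int Q_{2,\bm{k}}(\cdot\mid\bm{w},\bm{z},\bm{y}_{(\bm{k})})\, Q_{1,\bm{k}}(\df\bm{z}\mid\bm{w},\bm{y}_{(\bm{k})}),
\]
where $Q_{2,\bm{k}}$ is the full-data conditional, which \emph{is} genuinely inverse-Wishart plus matrix-normal. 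The quadratic-form bound then gives $\le (n-p+m)/w_i$ uniformly in the imputed value~$\bm{z}$, so the outer integral over $Q_{1,\bm{k}}$ disappears for free. This is much less bookkeeping than the pattern-wise route you sketch, and it explains why the final constant $C_1=n-p+m$ is so simple. A secondary detail you do not flag: because $V$ depends on $\sigmareal$ only through the submatrices $\bm{c}_{i,(\bm{k})}\sigmareal\bm{c}_{i,(\bm{k})}^\top$, one needs the distribution of $(\bm{c}_{i,(\bm{k})}\bm{\Sigma}^*\bm{c}_{i,(\bm{k})}^\top)^{-1}$ when $\bm{\Sigma}^*$ is inverse-Wishart; the paper invokes Muirhead's Theorem~3.2.11 to identify this as Wishart with $n-p+m-2d+d_i$ degrees of freedom, which is where the $d_i$-dependence in $C_1$ actually enters before it is bounded above by $n-p+m$.
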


\begin{proof}
	
	We will prove the result when $\bm{k}$ contains at least one vanishing element.
	When 
	there are no missing data under~$\bm{k}$, the result is proved by \cite{hobert2018convergence}.
	(To be absolutely precise, Hobert et al. assumed that $\mix(\cdot)$ is absolutely continuous with respect to the Lebesgue measure in the ``zero near the origin" case, but their proof can be easily extended to the case where absolute continuity is absent.)
	
	For $\betareal \in \mathbb{R}^{p \times d}$ and $\sigmareal \in S_+^{d \times d}$, let $P_{{\bm{k}}}(\cdot \mid \betareal, \sigmareal, \bm{y}_{(\bm{k})})$ be the conditional distribution of $\bm{W}$ given $(\bm{B}, \bm{\Sigma}, \bm{Y}_{(\bm{k})}) = (\betareal, \sigmareal, \bm{y}_{(\bm{k})})$.
	For $\bm{w} \in (0,\infty)^n$, let $Q_{{\bm{k}}}(\cdot \mid \bm{w}, \bm{y}_{(\bm{k})})$ be the conditional distribution of $(\bm{B},\bm{\Sigma})$ given $(\bm{W}, \bm{Y}_{(\bm{k})}) = (\bm{w}, \bm{y}_{(\bm{k})})$.
	Then
	\begin{equation} \label{eq:KV}
		KV(\betareal,\sigmareal) = \int_{(0,\infty)^n} \int_{\mathbb{R}^{p \times d} \times S_+^{d \times d}} V(\betareal^*,\sigmareal^*)  \, Q_{{\bm{k}}}(\df \betareal^*, \df \sigmareal^* \mid \bm{w}, \bm{y}_{(\bm{k})} ) \, P_{{\bm{k}}}(\df \bm{w} \mid \betareal, \sigmareal, \bm{y}_{(\bm{k})}).
	\end{equation}

	Let $\bm{k}_0 \in \{0,1\}^{n \times d}$ be a matrix such that all its entries are~1.
	In other words, $\bm{K} = \bm{k}_0$ if there are no data missing.
	$\bm{Y}_{(\bm{k}_0 - \bm{k})}$ denotes the unobservable entries of $\bm{Y}$ under the missing structure~$\bm{k}$.
	By our assumptions, given $(\bm{W}, \bm{Y}_{(\bm{k})}) = (\bm{w}, \bm{y}_{(\bm{k})})$, $(\bm{B}, \bm{\Sigma}, \bm{Y}_{(\bm{k}_0 - \bm{k})})$ has a proper conditional distribution.
	For $\bm{w} \in (0,\infty)^n$, denote by $Q_{1,{\bm{k}}}(\cdot \mid \bm{w}, \bm{y}_{(\bm{k})})$ the conditional distribution of $\bm{Y}_{(\bm{k}_0 - \bm{k})}$ given  $(\bm{W}, \bm{Y}_{(\bm{k})}) = (\bm{w}, \bm{y}_{(\bm{k})})$.
	For $\bm{w} \in (0,\infty)^n$ and a realized value of $\bm{Y}_{(\bm{k}_0 - \bm{k})}$, say, $\bm{z} \in \mathbb{R}^{\sum_{i=1}^n (d-d_i)}$, let $Q_{2,{\bm{k}}}(\cdot \mid \bm{w}, \bm{z} , \bm{y}_{(\bm{k})})$ be the conditional distribution of $(\bm{B}, \bm{\Sigma})$ given $(\bm{W}, \bm{Y}_{(\bm{k}_0 - \bm{k})}, \bm{Y}_{(\bm{k})}) = (\bm{w}, \bm{z}, \bm{y}_{(\bm{k})})$.
	We now describe this distribution (see Appendix \ref{app:dadetails}).
	Let $\bm{y}^*$ be a realized value of~$\bm{Y}$ such that $\bm{y}_{(\bm{k})}^* = \bm{y}_{(\bm{k})}$.
	Let $\bm{\lambda} = \mbox{diag}(w_1,\dots,w_n)$, where $w_1,\dots,w_n$ are the components of~$\bm{w}$.
	Let
	\[
	\hat{\betareal} = \left(\bm{x}^\top \bm{\lambda} \bm{x} \right)^{-1} \bm{x}^\top \bm{\lambda} \bm{y}^*, \quad \bm{s} = \left( \bm{y}^* - \bm{x} \hat{\betareal} \right)^\top \bm{\lambda} \left( \bm{y}^* - \bm{x} \hat{\betareal} \right),
	\] 
	and $\bm{\xi} =  (\bm{x}^\top \bm{\lambda} \bm{x})^{-1}$.
	Then $Q_{2,\bm{k}}(\cdot \mid \bm{\omega}, \bm{y}_{(\bm{k}_0 - \bm{k})}^*, \bm{y}_{\bm{(k)}})$ is given by
	\[
	\begin{aligned}
		\bm{\Sigma} \mid (\bm{W}, \bm{Y}) = (\bm{w}, \bm{y}^*) &\sim \mbox{IW}_d \left( n - p + m - d, \bm{s} + \bm{a} \right), \\
		\bm{B} \mid (\bm{\Sigma}, \bm{W}, \bm{Y}) = (\sigmareal, \bm{w}, \bm{y}^*)  &\sim \mbox{N}_{p,d} \left( \hat{\betareal}, \bm{\xi}, \sigmareal \right).
	\end{aligned}
	\]
	It is easy to verify that $Q_{{\bm{k}}}$, $Q_{1,{\bm{k}}}$, and $Q_{2,{\bm{k}}}$ are connected through the following tower property:
	\begin{equation} \label{eq:tower}
		Q_{{\bm{k}}} \left(\cdot \mid \bm{w}, \bm{y}_{(\bm{k})} \right) = \int_{\mathbb{R}^{\sum_{i=1}^n (d-d_i)}} Q_{2,{\bm{k}}} \left(\cdot \mid \bm{w}, \bm{y}_{(\bm{k}_0 - \bm{k})}^*, \bm{y}_{(\bm{k})} \right) \, Q_{1,{\bm{k}}} \left(\df \bm{y}_{(\bm{k}_0 - \bm{k})}^* \mid \bm{w}, \bm{y}_{(\bm{k})} \right).
	\end{equation}
	
	In light of~Equations \eqref{eq:KV} and~\eqref{eq:tower}, let us first investigate the integral
	\begin{equation} \label{eq:P4V}
		\begin{aligned}
			&\int_{\mathbb{R}^{p \times d} \times S_+^{d \times d}} V(\betareal^*,\sigmareal^*) \, Q_{2,{\bm{k}}} \left(\df \betareal^*, \df \sigmareal^* \mid \bm{w}, \bm{y}_{(\bm{k}_0 - \bm{k})}^*, \bm{y}_{(\bm{k})} \right) \\
			=& \int_{S_+^{d \times d}} \int_{\mathbb{R}^{p \times d}} V(\betareal^*,\sigmareal^*) \, Q_{4,{\bm{k}}} \left(\df \betareal^* \mid \sigmareal^*, \bm{w}, \bm{y}_{(\bm{k}_0 - \bm{k})}^*, \bm{y}_{(\bm{k})} \right) \, Q_{3,{\bm{k}}} \left( \df \sigmareal^* \mid \bm{w},  \bm{y}_{(\bm{k}_0 - \bm{k})}^*, \bm{y}_{(\bm{k})}  \right),
		\end{aligned}
	\end{equation}
	where $Q_{3,{\bm{k}}}(\cdot \mid \bm{w}, , \bm{y}_{(\bm{k}_0 - \bm{k})}^*, \bm{y}_{(\bm{k})})$ is the $\mbox{IW}_d(n-p+m-d, \bm{s} + \bm{a})$ distribution, and $Q_{4,{\bm{k}}}(\cdot \mid \sigmareal^*, \bm{w}, \bm{y}_{(\bm{k}_0 - \bm{k})}^*, \bm{y}_{(\bm{k})})$ is the $\mbox{N}_{p,d}(\hat{\bm{\beta}}, \bm{\xi}, \sigmareal^*)$ distribution.
	It follows from basic properties of matrix normal distributions that
	\[
	\begin{aligned}
		&\int_{\mathbb{R}^{p \times d}} V(\betareal^*,\sigmareal^*) \, Q_{4,{\bm{k}}} \left(\df \betareal^* \mid \sigmareal^*, \bm{w}, \bm{y}_{(\bm{k}_0 - \bm{k})}^*, \bm{y}_{(\bm{k})} \right) \\
		=& \sum_{i=1}^n \left\{ \left( \bm{y}_{i,(\bm{k})} - \bm{c}_{i,(\bm{k})} \hat{\bm{\beta}}^\top \bm{x}_i \right)^\top \left( \bm{c}_{i,(\bm{k})} \sigmareal^* \bm{c}_{i,(\bm{k})}^\top \right)^{-1} \left( \bm{y}_{i,(\bm{k})} - \bm{c}_{i,(\bm{k})} \hat{\bm{\beta}}^\top \bm{x}_i \right) + d_i \bm{x}_i^\top \bm{\xi} \bm{x}_i\right\}.
	\end{aligned}
	\]
	By Theorem 3.2.11 of \cite{muirhead2009aspects}, if $\bm{\Sigma}^*$ is a random matrix that follows the $\mbox{IW}_d(n-p+m-d, \bm{s} + \bm{a})$ distribution, then $(\bm{c}_{i,(\bm{k})} \bm{\Sigma}^* \bm{c}_{i,(\bm{k})}^\top)^{-1}$ follows the $\mbox{W}_{d_i}(n-p+m-2d+d_i, [\bm{c}_{i,(\bm{k})} (\bm{s} + \bm{a}) \bm{c}_{i,(\bm{k})}^\top ]^{-1})$ distribution.
	Then by basic properties of Wishart distributions,
	\begin{equation} \label{eq:P5P6V}
		\begin{aligned}
			& \int_{S_+^{d \times d}} \int_{\mathbb{R}^{p \times d}} V(\betareal^*,\sigmareal^*) \, Q_{4,{\bm{k}}} \left(\df \betareal^* \mid \sigmareal^*, \bm{w}, \bm{y}_{(\bm{k}_0 - \bm{k})}^*, \bm{y}_{(\bm{k})} \right) \, Q_{3,{\bm{k}}} \left( \df \sigmareal^* \mid \bm{w}, , \bm{y}_{(\bm{k}_0 - \bm{k})}^*, \bm{y}_{(\bm{k})}  \right) \\
			=& \sum_{i=1}^n (n-p+m-2d+d_i)  \left[ \left( \bm{y}_{i,(\bm{k})} - \bm{c}_{i,(\bm{k})} \hat{\bm{\beta}}^\top \bm{x}_i \right)^\top \left\{ \bm{c}_{i,(\bm{k})} (\bm{s} + \bm{a}) \bm{c}_{i,(\bm{k})}^\top \right\}^{-1} \left( \bm{y}_{i,(\bm{k})} - \bm{c}_{i,(\bm{k})} \hat{\bm{\beta}}^\top \bm{x}_i \right) \right] + \\
			&\sum_{ i = 1 }^n d_i \bm{x}_i^\top \bm{\xi} \bm{x}_i .
		\end{aligned}
	\end{equation}
	For $i=1,\dots,n$, denote by $\bm{y}_i^*$ the $i$th row of~$\bm{y}^*$, and note that $ \bm{c}_{i, (\bm{k})} \bm{y}_i^* = \bm{y}_{i,(\bm{k})}$.
	It follows from Lemma \ref{lem:positive}, which is stated right after this proof, that, for $i = 1,\dots,n$,
	\[
	\bm{x}_i^\top \bm{\xi} \bm{x}_i =  \bm{x}_i^\top \left( \sum_{j=1}^n w_j \bm{x}_j \bm{x}_j^\top \right)^{-1} \bm{x}_i \leq \frac{1}{w_i},
	\]
	and
	\[
	\begin{aligned}
		&\left( \bm{y}_{i,(\bm{k})} - \bm{c}_{i,(\bm{k})} \hat{\bm{\beta}}^\top \bm{x}_i \right)^\top \left\{ \bm{c}_{i,(\bm{k})} (\bm{s} + \bm{a}) \bm{c}_{i,(\bm{k})}^\top \right\}^{-1} \left( \bm{y}_{i,(\bm{k})} - \bm{c}_{i,(\bm{k})} \hat{\bm{\beta}}^\top \bm{x}_i \right) \\
		=& \left( \bm{y}_{i,(\bm{k})} - \bm{c}_{i,(\bm{k})} \hat{\bm{\beta}}^\top \bm{x}_i \right)^\top \left[ \bm{c}_{i,(\bm{k})} \left\{ \sum_{ j = 1 }^n w_j \left(\bm{y}_j^* - \hat{\bm{\beta}}^\top \bm{x}_j \right) \left(\bm{y}_j^* - \hat{\bm{\beta}}^\top \bm{x}_j \right)^\top \right\} \bm{c}_{i,(\bm{k})}^\top + \bm{c}_{i,(\bm{k})} \bm{a} \bm{c}_{i,(\bm{k})}^\top \right]^{-1} \\
		&\left( \bm{y}_{i,(\bm{k})} - \bm{c}_{i,(\bm{k})} \hat{\bm{\beta}}^\top \bm{x}_i \right) \\
		=& \left( \bm{y}_{i,(\bm{k})} - \bm{c}_{i,(\bm{k})} \hat{\bm{\beta}}^\top \bm{x}_i \right)^\top \left\{\sum_{j=1}^n w_j \left( \bm{y}_{j,(\bm{k})} - \bm{c}_{j,(\bm{k})} \hat{\bm{\beta}}^\top \bm{x}_j \right) \left( \bm{y}_{j,(\bm{k})} - \bm{c}_{j,(\bm{k})} \hat{\bm{\beta}}^\top \bm{x}_j \right)^\top + \bm{c}_{i,(\bm{k})} \bm{a} \bm{c}_{i,(\bm{k})}^\top \right\} ^{-1} \\
		&\left( \bm{y}_{i,(\bm{k})} - \bm{c}_{i,(\bm{k})} \hat{\bm{\beta}}^\top \bm{x}_i \right) \\
		\leq & \frac{1}{w_i}.
	\end{aligned}
	\]
	
	Notice that $1/\omega_i$ is a constant, and does not depend on $\bm{y}_{(\bm{k}_0 - \bm{k})}^*$. 
	It then follows from~Equations \eqref{eq:KV} to \eqref{eq:P5P6V} that
	\[
	\begin{aligned}
		KV(\betareal,\sigmareal) &\leq \int_{(0,\infty)^n} \left( \sum_{ i = 1 }^n \frac{n-p+m-2d+2d_i}{w_i} \right) P_{{\bm{k}}}(\df \bm{w} \mid \betareal, \sigmareal, \bm{y}_{(\bm{k})})\\& \leq \int_{(0,\infty)^n} \left( \sum_{ i = 1 }^n \frac{n-p+m}{w_i} \right) P_{{\bm{k}}}(\df \bm{w} \mid \betareal, \sigmareal, \bm{y}_{(\bm{k})}),
	\end{aligned}
	\]
	where $P_{{\bm{k}}}(\df \bm{w} \mid \betareal, \sigmareal, \bm{y}_{(\bm{k})}$) is given by Equation \eqref{eq:wconditional}.
	By Lemma \ref{lem:weightintegral} below, there exist $\eta' < 1/(n-p+m)$ and $\varrho' < \infty$ such that, for $(\betareal, \sigmareal) \in \mathcal{X}$,
	$$ 
	\int_{(0,\infty)^n} \left( \sum_{ i = 1 }^n \frac{n-p+m}{w_i} \right) P_{{\bm{k}}}(\df \bm{w} \mid \betareal, \sigmareal, \bm{y}_{(\bm{k})})\\
	\leq \sum_{ i = 1 }^n (n-p+m)(\eta' {r_{i, (\bm{k})}} + \varrho') .
	$$
	Then desired result holds with $\eta = (n-p+m) \eta' < 1$ and $\varrho = n (n-p+m) \varrho' < \infty$.
\end{proof}

\begin{lemma}\label{lem:positive}
	Let $\bm{\varphi}$ be a positive definite matrix, and $\bm{\upsilon}$ be a vector, such that the matrix $\bm{\varphi} - \bm{\upsilon \upsilon}^{T}$ is positive semidefinite, then $\bm{\upsilon}^{T} \bm{\varphi}^{-1} \bm{\upsilon} \leq 1$.
\end{lemma}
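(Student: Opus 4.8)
The plan is to work directly from the definition of positive semidefiniteness of $\bm{\varphi} - \bm{\upsilon}\bm{\upsilon}^\top$. Because this matrix is positive semidefinite, for every vector $\bm{z}$ we have $\bm{z}^\top(\bm{\varphi} - \bm{\upsilon}\bm{\upsilon}^\top)\bm{z} \geq 0$, which rearranges into the scalar inequality $\bm{z}^\top \bm{\varphi}\, \bm{z} \geq (\bm{z}^\top \bm{\upsilon})^2$. The key idea is to specialize $\bm{z}$ to a choice that makes the target quantity $\bm{\upsilon}^\top \bm{\varphi}^{-1}\bm{\upsilon}$ appear on both sides, after which the bound falls out of a one-variable inequality.

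The natural choice is $\bm{z} = \bm{\varphi}^{-1}\bm{\upsilon}$, which is well defined since $\bm{\varphi}$ is positive definite and hence invertible. With this substitution the left-hand side collapses to $\bm{\upsilon}^\top \bm{\varphi}^{-1}\bm{\upsilon}$ and the right-hand side to $(\bm{\upsilon}^\top \bm{\varphi}^{-1}\bm{\upsilon})^2$, using only the symmetry of $\bm{\varphi}^{-1}$. Setting $a = \bm{\upsilon}^\top \bm{\varphi}^{-1}\bm{\upsilon}$, the inequality becomes $a \geq a^2$. Since $\bm{\varphi}^{-1}$ is itself positive definite we have $a \geq 0$, so $a(1-a) \geq 0$ forces $a \leq 1$, which is precisely the claim.

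This argument is short and presents no genuine obstacle; the only step requiring a moment of care is the choice of the test vector $\bm{z} = \bm{\varphi}^{-1}\bm{\upsilon}$ together with verifying the two algebraic simplifications, both immediate from the symmetry of $\bm{\varphi}^{-1}$. Should one prefer to avoid selecting a test vector, an equivalent route is to conjugate by $\bm{\varphi}^{-1/2}$: the hypothesis $\bm{\varphi} - \bm{\upsilon}\bm{\upsilon}^\top \succeq 0$ transforms into $I - \bm{u}\bm{u}^\top \succeq 0$ with $\bm{u} = \bm{\varphi}^{-1/2}\bm{\upsilon}$, and since the smallest eigenvalue of the rank-one update $I - \bm{u}\bm{u}^\top$ equals $1 - \|\bm{u}\|^2 = 1 - \bm{\upsilon}^\top\bm{\varphi}^{-1}\bm{\upsilon}$, nonnegativity of this eigenvalue yields the bound at once. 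I would present the first route as the main proof, as it avoids invoking the matrix square root.
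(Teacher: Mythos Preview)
Your proof is correct. The paper does not actually supply its own argument for this lemma; it simply cites \cite{roy2010monte}, Lemma~3. Your test-vector approach with $\bm{z} = \bm{\varphi}^{-1}\bm{\upsilon}$ is a clean, self-contained proof, and the alternative via conjugation by $\bm{\varphi}^{-1/2}$ is equally valid. Either would serve well in place of the external citation.
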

For the proof of Lemma \ref{lem:positive}, see, e.g., \cite{roy2010monte}, Lemma 3.

\begin{lemma}\label{lem:weightintegral}
	Suppose that Condition \eqref{hh} holds and that $P_{\text{mix}}{(\cdot)}$ is either zero near the origin, or faster than polynomial near the origin, or polynomial near the origin with power $c> c_1$, where
	\[
	c_1=\frac{n-p+m-\min\{d_1,\dots,d_n\}}{2}.
	\]
	Then there exist $\eta  \in[0, 1/(n-p+m))$ and $\varrho  < \infty$, such that for all $\tilde{d} \in [\min\{d_1, \dots, d_n\}, d]$ and $\tilde{r}\in[0,\infty)$,
	\begin{equation}\nonumber
		\frac{\int_{0}^{\infty} w^{(\tilde{d}-2)/2} \exp({-\tilde{r}w/2}) \mix{(\df w)}}{\int_{0}^{\infty} w^{\tilde{d}/2} \exp({-\tilde{r}w/2}) \mix{(\df w)}} \leq \eta \tilde{r} + \varrho.  
	\end{equation}
\end{lemma}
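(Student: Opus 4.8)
The plan is to treat the quantity
$$G(\tilde{r}, \tilde{d}) := \frac{\int_0^\infty w^{(\tilde{d} - 2)/2}\exp(-\tilde{r} w/2)\,\mix(\df w)}{\int_0^\infty w^{\tilde{d}/2}\exp(-\tilde{r} w/2)\,\mix(\df w)}$$
as the expectation $\mathbb{E}_{\tilde{r},\tilde{d}}[1/W]$ of $1/W$ under the exponentially tilted probability measure proportional to $w^{\tilde{d}/2}\exp(-\tilde{r} w/2)\,\mix(\df w)$, and to split the argument into a bounded regime and a large-$\tilde{r}$ regime. First I would show that $G$ is finite and jointly continuous on $[0,R]\times[\min\{d_1,\dots,d_n\}, d]$ for each finite $R$: the denominator is finite and strictly positive by Condition \eqref{hh}, and the numerator is finite at $\tilde{r} = 0$ because in each case the integrand is integrable near the origin -- in the polynomial case this uses $c > c_1 \geq -\min\{d_1,\dots,d_n\}/2$, an inequality equivalent to $n-p+m \geq 0$, which holds in our setting. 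Compactness then yields a constant bounding $G$ on this set, to be absorbed into $\varrho$. It therefore suffices to control the growth of $G(\tilde{r}, \tilde{d})$ as $\tilde{r} \to \infty$, uniformly over $\tilde{d}$; since $\tilde{d}$ ranges over a compact interval (indeed, only the finitely many values $d_1,\dots,d_n$ arise in the application), continuity upgrades pointwise-in-$\tilde{d}$ rate bounds to uniform ones.

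For the large-$\tilde{r}$ analysis I would use the substitution $w = u/\tilde{r}$, which gives the exact identity
$$G(\tilde{r}, \tilde{d}) = \tilde{r}\cdot\frac{\int_0^\infty u^{(\tilde{d}-2)/2}\exp(-u/2)\,\mixdensity(u/\tilde{r})\,\df u}{\int_0^\infty u^{\tilde{d}/2}\exp(-u/2)\,\mixdensity(u/\tilde{r})\,\df u}$$
in the two cases where $\mix$ admits a density $\mixdensity$. When $\mix$ is \emph{zero near the origin} there is $\theta > 0$ with $W \geq \theta$ almost surely, so $G \leq 1/\theta$ uniformly and one may take $\eta = 0$. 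When $\mix$ is \emph{polynomial near the origin with power} $c$, I would bound the numerator above using $\mixdensity(w) \leq M w^c$ near $0$ together with a negligible tail estimate controlled by \eqref{hh}, and bound the denominator below using that $\mixdensity$ is genuinely of order $w^c$ near the origin; dominated convergence then sends the ratio of $u$-integrals to $\Gamma(\tilde{d}/2 + c)/[2\,\Gamma(\tilde{d}/2 + c + 1)] = 1/(2c + \tilde{d})$, so that $G(\tilde{r}, \tilde{d})/\tilde{r} \to 1/(2c+\tilde{d})$. When $\mix$ is \emph{faster than polynomial near the origin}, the bound $\mixdensity(w)\leq M_c w^c$ holds for \emph{every} $c > 0$ near $0$, forcing the numerator to decay faster than any power of $\tilde{r}$; comparing with the denominator yields $G(\tilde{r}, \tilde{d}) = o(\tilde{r})$, so any positive $\eta$ suffices.

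Finally I would assemble the pieces. Since $1/(2c+\tilde{d})$ is decreasing in $\tilde{d}$, its supremum over $\tilde{d} \in [\min\{d_1,\dots,d_n\}, d]$ is $1/(2c + \min\{d_1,\dots,d_n\})$, and the hypothesis $c > c_1 = (n - p + m - \min\{d_1,\dots,d_n\})/2$ is precisely what makes
$$\frac{1}{2c + \min\{d_1,\dots,d_n\}} < \frac{1}{n - p + m}.$$
Choosing $\eta$ strictly between these two values handles the polynomial case for all $\tilde{r}$ past some threshold, while the bounded regime is absorbed into $\varrho$; in the zero and faster-than-polynomial cases $\eta$ may be taken to be $0$, respectively arbitrarily small, again below $1/(n-p+m)$. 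I expect the polynomial case to be the main obstacle: pinning down the \emph{sharp} constant $1/(2c+\tilde{d})$ requires a two-sided near-origin comparison, in particular a denominator lower bound of the correct order $\tilde{r}^{-(\tilde{d}/2 + c + 1)}$, which is delicate when $\lim_{w\to 0}\mixdensity(w)/w^c$ fails to be strictly positive, and both the tail contributions (where $u/\tilde{r}$ is not small) and the uniform-in-$\tilde{d}$ dominated-convergence step demand care. An alternative route that cleanly exposes the constant is the integration-by-parts identity $G = \tilde{r}/\tilde{d} - (2/\tilde{d})\,\mathbb{E}_{\tilde{r},\tilde{d}}[(\log \mixdensity)'(W)]$, since $(\log\mixdensity)'(w)\approx c/w$ near the origin self-referentially gives $G\,(\tilde{d} + 2c)\approx \tilde{r}$.
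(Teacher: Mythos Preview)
Your route is genuinely different from the paper's. The paper never analyzes the ratio asymptotically; instead it absorbs the exponent $\tilde{d}$ into a new mixing density $\mixdensity^{\tilde{d}}\propto w^{(\tilde{d}-1)/2}\mixdensity$, thereby reducing to the case $\tilde{d}=1$, and then invokes two comparison lemmas from \cite{hobert2018convergence}. For Gamma$(\alpha,1)$ the ratio equals $\tilde{r}/(2\alpha-1)+2/(2\alpha-1)$ exactly, so in the polynomial case one compares $\mixdensity^{\tilde{d}}$ to Gamma$(c+(\tilde{d}+1)/2,1)$ via Lemma~1 (same limiting ratio at the origin $\Rightarrow$ same $\eta$), and in the faster-than-polynomial case one uses Lemma~2 (if $\mixdensity/p_{G(\alpha)}$ is increasing near $0$ then $\eta_{\mixdensity}\le\eta_{p_{G(\alpha)}}=1/(2\alpha-1)$), letting $\alpha\to\infty$. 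Your compactness-plus-asymptotics scheme is natural and, in the zero-near-origin case, coincides with the paper's one-line bound $G\le 1/\theta$.

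There is, however, a real gap in your faster-than-polynomial step. You argue that $\mixdensity(w)\le M_c w^c$ for every $c>0$ forces the numerator to decay faster than any power of $\tilde{r}$, and then conclude $G=o(\tilde{r})$. But the denominator \emph{also} decays faster than any power of~$\tilde{r}$ under the same hypothesis, so a one-sided decay estimate on the numerator says nothing about the ratio; you need a matching lower bound on the denominator, and that is exactly the obstruction you already flagged in the polynomial case. The paper avoids this entirely by exploiting the \emph{monotonicity} of $\mixdensity(w)/w^c$ (which is the actual definition of ``faster than polynomial'') rather than the mere bound it implies: monotonicity is precisely the hypothesis of Hobert et al.'s Lemma~2. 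Your integration-by-parts identity $G=\tilde{r}/\tilde{d}-(2/\tilde{d})\,\mathbb{E}_{\tilde{r},\tilde{d}}[(\log\mixdensity)'(W)]$ is correct and does expose the constant, but making $(\log\mixdensity)'(w)\approx c/w$ rigorous again requires two-sided control of $\mixdensity$ near the origin that neither your outline nor the paper's definition supplies.
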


\begin{proof}	
	We will make use of results in \cite{hobert2018convergence}.
	
	When $P_{\text{mix}}(\cdot)$ is zero near the origin, there exists $\theta>0$, such that $\int_{0}^{\theta}  P_{\text{mix}}(\df w)=0$. 
	Then, for all $\tilde{d} \in [\min\{d_1, \dots, d_n\}, d]$ and $\tilde{r}\in[0,\infty)$,
	\[
	\begin{aligned}
		\frac{\int_{0}^{\infty} w^{(\tilde{d}-2)/2} \exp({-\tilde{r}w/2}) P_{\text{mix}}{(\df w)}}{\int_{0}^{\infty} w^{\tilde{d}/2} \exp({-\tilde{r}w/2}) P_{\text{mix}}{(\df w)}} &= \frac{\int_{\theta}^{\infty} (1/w) w^{\tilde{d}/2} \exp({-\tilde{r}w/2}) P_{\text{mix}}{(\df w)}}{\int_{\theta}^{\infty} w^{\tilde{d}/2} \exp({-\tilde{r}w/2}) P_{\text{mix}}{(\df w)}} \\
		&\leq  \frac{(1/\theta)\int_{\theta}^{\infty} w^{\tilde{d}/2} \exp({-\tilde{r}w/2}) P_{\text{mix}}{(\df w)}}{\int_{\theta}^{\infty} w^{\tilde{d}/2} \exp({-\tilde{r}w/2}) P_{\text{mix}}{(\df w)}}\\
		&=1/{\theta}.
	\end{aligned}
	\]
	
	When $P_{\text{mix}}(\cdot)$ is polynomial near the origin or faster than polynomial near the origin, recall that $\mixdensity(\cdot)$ is the density function of $\mix(\cdot)$ with respect to the Lebesgue measure. 
	Let $S(\mixdensity)$ be the set of $\eta\in[0,\infty)$ such that there exists $\varrho_{\eta}\in\mathbb{R}$, satisfying 		
	$$\frac{\int_{0}^{\infty} w^{-1/2} \exp({-\tilde{r}w/2}) P_{\text{mix}}{(\df w)}}{\int_{0}^{\infty} w^{1/2} \exp({-\tilde{r}w/2}) P_{\text{mix}}{(\df w)}} \leq \eta \tilde{r} + \varrho_{\eta} $$
	for all $\tilde{r}\in [0,\infty)$. If $S(\mixdensity)$ is non-empty, let $\eta_{\mixdensity} = \inf S(\mixdensity) $; otherwise, set $\eta_{\mixdensity}=\infty$.

	Consider a Gamma $(\alpha, 1)$ mixing distribution with density 
	\[
	p_{G(\alpha)}(w) = \{\Gamma_1(\alpha)\}^{-1} w^{\alpha-1}\exp({-w}).
	\]
	It is easy to see that if $\alpha>1/2$, for all $\tilde{r}\in[0,\infty)$, 
	$$
	\frac{\int_{0}^{\infty} w^{-1/2} \exp({-\tilde{r}w/2}) p_{G(\alpha)}(w)dw}{\int_{0}^{\infty} w^{1/2} \exp({-\tilde{r}w/2}) p_{G(\alpha)}(w)dw}  = \frac{1}{2\alpha-1}\tilde{r} + \frac{2}{2\alpha-1}. 
	$$
	Therefore, if $\alpha>1/2$, $\eta_{p_{G(\alpha)}} = 1/(2\alpha-1)$.
	
	Let $\mixdensity^{\tilde{d}}(\cdot)$ be a mixing density proportional to $w^{(\tilde{d}-1)/2} \mixdensity (\cdot)$. Note that 
	$$		\frac{\int_{0}^{\infty} w^{(\tilde{d}-2)/2} \exp({-\tilde{r}w/2}) P_{\text{mix}}{(\df w)}}{\int_{0}^{\infty} w^{\tilde{d}/2} \exp({-\tilde{r}w/2}) P_{\text{mix}}{(\df w)}} = \frac{\int_{0}^{\infty} w^{-1/2} \exp({-\tilde{r}w/2}) \mixdensity^{\tilde{d}}(w)dw}{\int_{0}^{\infty} w^{1/2} \exp({-\tilde{r}w/2}) \mixdensity^{\tilde{d}}(w)dw}.$$
	Therefore, it suffices to prove that $\eta_{\mixdensity^{\tilde{d}}}< 1/(n-p+m)$.
	
	When $P_{\text{mix}}(\cdot)$ is polynomial near the origin with power $c>c_1$, 
	the distribution associated with $\mixdensity^{\tilde{d}}(\cdot)$ is polynomial near the origin with power $c+(\tilde{d}-1)/2$. Let $p^{\tilde{d}}_G(\cdot)$ be a mixing density following Gamma $(c+(\tilde{d}+1)/2, 1)$. We have $$\lim_{w\to 0}\frac{\mixdensity^{\tilde{d}}(w)}{p^{\tilde{d}}_G(w)}\in(0,\infty).$$ By Lemma 1 in \cite{hobert2018convergence}, for all $\tilde{d} \in [\min\{d_1, \dots, d_n\}, d]$, 
	$$\eta_{\mixdensity^{\tilde{d}}} = \eta_{p^{\tilde{d}}_G} = \frac{1}{2c+\tilde{d}} < \frac{1}{n-p+m}.$$

	When $P_{\text{mix}}(\cdot)$ is faster than polynomial near the origin, again let $\mixdensity^{\tilde{d}}(\cdot)$ be a mixing density proportional to $w^{(\tilde{d}-1)/2} \mixdensity (\cdot)$. 
	The distribution associated with $\mixdensity^{\tilde{d}}(\cdot)$ is faster than polynomial near the origin. Let $\alpha>1/2$. Recall that $p_{G(\alpha)}(w)$  is the density of the Gamma $(\alpha, 1)$ mixing distribution. By the definition of faster than polynomial near the origin, there exists $\kappa_{\alpha-1}>0$, such that $\mixdensity^{\tilde{d}}(w)/w^{\alpha-1}$ is strictly increasing on $(0, \kappa_{\alpha-1})$. Thus, $\mixdensity^{\tilde{d}}(w)/p_{G(\alpha)}(w)$ is strictly increasing on $(0, \kappa_{\alpha-1})$. By Lemma 2 in \cite{hobert2018convergence}, $$\eta_{\mixdensity^{\tilde{d}}} \leq \eta_{p_{G(\alpha)}} = \frac{1}{2\alpha-1}.$$
	Since $\alpha$ can be any value larger than $1/2$, $\eta_{\mixdensity^{\tilde{d}}} = 0$, for all  $\tilde{d} \in [\min\{d_1, \dots, d_n\}, d]$.

\end{proof}


\section{The ESS and ESSpm of Each Component in Section \ref{sec:num}} \label{app:ess}

\begin{table}[H]
	\centering
	\caption{
		The ESS of each component in different scenarios. The model column lists the mixing distributions and samplers used, and the data column labels the missing structures by the number of fully observed rows.}\label{app:exp2ess}
	\begin{tabular}{lllllllll}
		\hline
		Model & Data & $\bm{B}_{11}$ & $\bm{B}_{21}$ & $\bm{B}_{12}$ & $\bm{B}_{22}$ & $\bm{\Sigma}_{11}$ & $\bm{\Sigma}_{21}$ & $\bm{\Sigma}_{22}$ \\ \hline
		DAG & 45 & 19332 & 17450 & 19098 & 16447 & 10290 & 11646 & 10528 \\ 
		DAG & 40 & 20001 & 18399 & 17549 & 19992 & 10315 & 12061 & 10714 \\ 
		DAG & 35 & 16960 & 18459 & 16786 & 16967 & 11605 & 12442 & 11551 \\ 
		DAIG & 45 & 16471 & 19242 & 17976 & 19443 & 9970 & 10723 & 9888 \\ 
		DAIG & 40 & 14292 & 15528 & 18045 & 17066 & 8335 & 11155 & 9897 \\ 
		DAIG & 35 & 13330 & 17612 & 17745 & 16203 & 9676 & 8988 & 10064 \\ 
		DAGIG & 45 & 16843 & 17429 & 14810 & 15401 & 13796 & 13232 & 13422 \\ 
		DAGIG & 40 & 20661 & 20371 & 21888 & 14722 & 13375 & 14279 & 12765 \\ 
		DAGIG & 35 & 17729 & 15738 & 17347 & 15574 & 15619 & 12788 & 12722 \\ 
		DAIGIG & 45 & 18267 & 15193 & 17571 & 15148 & 11803 & 12759 & 13323 \\ 
		DAIGIG & 40 & 16540 & 18311 & 16763 & 17924 & 14042 & 14860 & 14749 \\ 
		DAIGIG & 35 & 12246 & 14077 & 18134 & 15717 & 8973 & 9506 & 12939 \\ 
		DAP & 45 & 30000 & 30000 & 30000 & 30000 & 30000 & 29724 & 30000 \\ 
		DAP & 40 & 30000 & 30000 & 30000 & 30000 & 30000 & 30000 & 30438 \\ 
		DAP & 35 & 30000 & 30000 & 29216 & 30000 & 30000 & 30000 & 30000 \\ 
		DAIP & 45 & 22822 & 26402 & 30000 & 30000 & 25435 & 29601 & 30000 \\ 
		DAIP & 40 & 20145 & 23817 & 30000 & 30000 & 20160 & 26305 & 30000 \\ 
		DAIP & 35 & 13894 & 22094 & 30000 & 30000 & 16530 & 13943 & 30000 \\   \hline
	\end{tabular}
\end{table}

\begin{table}[H]
	\centering
	\caption{The ESSpm of each component in different scenarios. The model column lists the mixing distributions and samplers used, and the data column labels the missing structures by the number of fully observed rows.}\label{app:exp2esspm}
	\begin{tabular}{lllllllll}
		\hline
		Model & Data & $\bm{B}_{11}$ & $\bm{B}_{21}$ & $\bm{B}_{12}$ & $\bm{B}_{22}$ & $\bm{\Sigma}_{11}$ & $\bm{\Sigma}_{21}$ & $\bm{\Sigma}_{22}$ \\ \hline
		DAG & 45 & 4421 & 3990 & 4367 & 3761 & 2353 & 2663 & 2407 \\ 
		DAG & 40 & 4886 & 4495 & 4287 & 4884 & 2520 & 2947 & 2618 \\ 
		DAG & 35 & 4213 & 4585 & 4170 & 4215 & 2883 & 3090 & 2869 \\ 
		DAIG & 45 & 3286 & 3838 & 3586 & 3878 & 1989 & 2139 & 1972 \\ 
		DAIG & 40 & 2891 & 3141 & 3651 & 3452 & 1686 & 2257 & 2002 \\ 
		DAIG & 35 & 2850 & 3766 & 3794 & 3464 & 2069 & 1922 & 2152 \\ 
		DAGIG & 45 & 1611 & 1667 & 1417 & 1473 & 1320 & 1266 & 1284 \\ 
		DAGIG & 40 & 2072 & 2043 & 2195 & 1477 & 1341 & 1432 & 1280 \\ 
		DAGIG & 35 & 1803 & 1601 & 1765 & 1584 & 1589 & 1301 & 1294 \\ 
		DAIGIG & 45 & 1701 & 1415 & 1636 & 1410 & 1099 & 1188 & 1240 \\ 
		DAIGIG & 40 & 1543 & 1709 & 1564 & 1672 & 1310 & 1387 & 1376 \\ 
		DAIGIG & 35 & 1201 & 1381 & 1779 & 1542 & 880 & 932 & 1269 \\ 
		DAP & 45 & 6959 & 6959 & 6959 & 6959 & 6959 & 6895 & 6959 \\ 
		DAP & 40 & 7793 & 7793 & 7793 & 7793 & 7793 & 7793 & 7907 \\ 
		DAP & 35 & 7478 & 7478 & 7282 & 7478 & 7478 & 7478 & 7478 \\ 
		DAIP & 45 & 4929 & 5702 & 6479 & 6479 & 5493 & 6393 & 6479 \\ 
		DAIP & 40 & 4415 & 5219 & 6574 & 6574 & 4418 & 5765 & 6574 \\ 
		DAIP & 35 & 3183 & 5062 & 6873 & 6873 & 3787 & 3194 & 6873 \\    \hline
	\end{tabular}
\end{table}

\begin{table}[H]
	\centering
	\caption{
		The ESS of each component in different scenarios. The model column lists the Gamma$(v,v)$ mixing distributions and samplers used, and the data column labels the missing structures by the number of fully observed rows.
	}\label{app:exp3ess}
	\begin{tabular}{lllllllll}
		\hline
		Model & Data & $\bm{B}_{11}$ & $\bm{B}_{21}$ & $\bm{B}_{12}$ & $\bm{B}_{22}$ & $\bm{\Sigma}_{11}$ & $\bm{\Sigma}_{21}$ & $\bm{\Sigma}_{22}$ \\ \hline
		DAG2 & 45 & 19332 & 17450 & 19098 & 16447 & 10290 & 11646 & 10528 \\ 
		DAG2 & 40 & 20001 & 18399 & 17549 & 19992 & 10315 & 12061 & 10714 \\ 
		DAG2 & 35 & 16960 & 18459 & 16786 & 16967 & 11605 & 12442 & 11551 \\ 
		DAIG2 & 45 & 16471 & 19242 & 17976 & 19443 & 9970 & 10723 & 9888 \\ 
		DAIG2 & 40 & 14292 & 15528 & 18045 & 17066 & 8335 & 11155 & 9897 \\ 
		DAIG2 & 35 & 13330 & 17612 & 17745 & 16203 & 9676 & 8988 & 10064 \\ 
		DAG8 & 45 & 26359 & 23824 & 25032 & 25504 & 16537 & 19326 & 16185 \\ 
		DAG8 & 40 & 26122 & 26707 & 25362 & 26588 & 14952 & 19118 & 15616 \\ 
		DAG8 & 35 & 25470 & 24347 & 24819 & 25501 & 18445 & 16390 & 16759 \\ 
		DAIG8 & 45 & 20631 & 23024 & 25552 & 26183 & 13685 & 15991 & 15078 \\ 
		DAIG8 & 40 & 18122 & 21684 & 23664 & 25824 & 13061 & 18082 & 16806 \\ 
		DAIG8 & 35 & 14837 & 22029 & 24967 & 26256 & 11553 & 11712 & 15336 \\ 
		DAG25 & 45 & 29041 & 29055 & 27919 & 28865 & 21494 & 22572 & 23345 \\ 
		DAG25 & 40 & 28018 & 29302 & 28548 & 29081 & 23303 & 21042 & 23013 \\ 
		DAG25 & 35 & 29310 & 29162 & 29052 & 30000 & 18884 & 16623 & 20118 \\ 
		DAIG25 & 45 & 23968 & 24454 & 27999 & 29108 & 17254 & 20737 & 22495 \\ 
		DAIG25 & 40 & 18613 & 23362 & 28295 & 28963 & 15049 & 18725 & 21078 \\ 
		DAIG25 & 35 & 17014 & 20338 & 28073 & 29150 & 10450 & 9397 & 22567 \\   \hline
	\end{tabular}
\end{table}

\begin{table}[H]
	\centering
	\caption{The ESSpm of each component in different scenarios. The model column lists the Gamma$(v,v)$ mixing distributions and samplers used, and the data column labels the missing structures by the number of fully observed rows.}\label{app:exp3esspm}
	\begin{tabular}{lllllllll}
		\hline
		Model & Data & $\bm{B}_{11}$ & $\bm{B}_{21}$ & $\bm{B}_{12}$ & $\bm{B}_{22}$ & $\bm{\Sigma}_{11}$ & $\bm{\Sigma}_{21}$ & $\bm{\Sigma}_{22}$ \\ \hline
		DAG2 & 45 & 4421 & 3990 & 4367 & 3761 & 2353 & 2663 & 2407 \\ 
		DAG2 & 40 & 4886 & 4495 & 4287 & 4884 & 2520 & 2947 & 2618 \\ 
		DAG2 & 35 & 4213 & 4585 & 4170 & 4215 & 2883 & 3090 & 2869 \\ 
		DAIG2 & 45 & 3286 & 3838 & 3586 & 3878 & 1989 & 2139 & 1972 \\ 
		DAIG2 & 40 & 2891 & 3141 & 3651 & 3452 & 1686 & 2257 & 2002 \\ 
		DAIG2 & 35 & 2850 & 3766 & 3794 & 3464 & 2069 & 1922 & 2152 \\ 
		DAG8 & 45 & 6129 & 5539 & 5820 & 5930 & 3845 & 4494 & 3763 \\ 
		DAG8 & 40 & 6495 & 6641 & 6306 & 6611 & 3718 & 4754 & 3883 \\ 
		DAG8 & 35 & 6632 & 6339 & 6462 & 6640 & 4803 & 4268 & 4364 \\ 
		DAIG8 & 45 & 4095 & 4570 & 5072 & 5197 & 2717 & 3174 & 2993 \\ 
		DAIG8 & 40 & 3894 & 4659 & 5085 & 5549 & 2806 & 3885 & 3611 \\ 
		DAIG8 & 35 & 3265 & 4847 & 5494 & 5777 & 2542 & 2577 & 3375 \\ 
		DAG25 & 45 & 7115 & 7118 & 6840 & 7072 & 5266 & 5530 & 5719 \\ 
		DAG25 & 40 & 7097 & 7423 & 7232 & 7367 & 5903 & 5330 & 5830 \\ 
		DAG25 & 35 & 7728 & 7689 & 7660 & 7910 & 4979 & 4383 & 5304 \\ 
		DAIG25 & 45 & 5067 & 5170 & 5919 & 6154 & 3648 & 4384 & 4755 \\ 
		DAIG25 & 40 & 4099 & 5144 & 6231 & 6378 & 3314 & 4123 & 4642 \\ 
		DAIG25 & 35 & 3898 & 4660 & 6432 & 6679 & 2394 & 2153 & 5170 \\   \hline
	\end{tabular}
\end{table}




\newpage

\renewcommand{\thepage}{}
\bibliographystyle{plainnat}
\bibliography{JMVA_incomplete}
\end{document}